\def\tr{{\rm tr}}
\newtheorem{thm}{Theorem}[section]
\newtheorem {asp}{Assumption}[section]
\newtheorem{lm}{Lemma}[section]
\newtheorem{deff}{Definition}[section]
\theoremstyle{definition}
\theoremstyle{remark}
\newtheorem{rem}{Remark}[section]
\newtheorem{exam}{Example}[section]
\numberwithin{equation}{section}
\DeclareMathOperator{\trace}{tr}
\newcommand{\eps}{\varepsilon}
\newcommand{\h}{\mathcal{H}}
\newcommand{\A}{\mathcal{A}}
\newcommand{\D}{\mathcal{D}}
\newcommand{\E}{\mathbb{E}}
\newcommand{\Lom}{\mathcal{L}}
\newcommand{\Q}{{\mathbb{Q}}}
\newcommand{\bx}{{\mathbf{x}}}
\newcommand{\by}{{\mathbf{y}}}
\newcommand{\bz}{{\mathbf{z}}}
\newcommand{\wN}{{\widetilde{\mathbf{N}}}}
\newcommand{\R}{\mathbb{R}}
\newcommand{\Ra}{{\R^{n_1}_*}}
\newcommand{\Rb}{{\R^{n_2}_*}}
\newcommand{\BX}{\mathbf{X}}
\newcommand{\BY}{\mathbf{Y}}
\newcommand{\BZ}{\mathbf{Z}}
\newcommand{\BW}{\mathbf{W}}
\newcommand{\Sc}{\mathcal{S}}
\newcommand{\PP}{\mathbb{P}}
\newcommand{\RX}{\mathbb{R}^{\ell_1}}
\newcommand{\RY}{\mathbb{R}^{\ell_2}}
\newcommand{\norm}[1]{\left\Vert#1\right\Vert}
\numberwithin{equation}{section}
\newcommand{\1}{\boldsymbol{1}}
\newcommand{\bphi}{\boldsymbol{\phi}}
\newcommand{\0}{\boldsymbol{0}}
\newcommand{\wdt}{\widetilde}
\newcommand{\op}{{\cal L}}
\newcommand{\bed}{\begin{equation}}
\newcommand{\eed}{\end{equation}}
\newcommand{\bea}{\bed\begin{array}{rl}}
\newcommand{\eea}{\end{array}\eed}
\newcommand{\ad}{&\!\!\!\disp}
\newcommand{\aad}{&\disp}
\newcommand{\barray}{\begin{array}{ll}}
\newcommand{\earray}{\end{array}}
\newcommand{\diag}{{\rm diag}}
\def\disp{\displaystyle}
\def\bar{\overline}
\def\hat{\widehat}
\def\a.s{\text{\;a.s.\;}}
\def\bnu{\boldsymbol{\nu}}
\def\deltaxe{{\theta_{\bx_1,\eps}}}
	\def\txe{{T_{\bx_1,\eps}}}
		\def\mxe{{M_{\bx_1,\eps}}}
\begin{document}
\title{Stability and Stabilization of Coupled Jump Diffusions and Applications}
\author{Dang H. Nguyen\thanks{Department of Mathematics, University of Alabama, Tuscaloosa, AL
35487, dangnh.maths@gmail.com. The research of this author was supported in part by the National Science Foundation under grant DMS-1853467.},
\and Duy Nguyen\thanks{Department of Mathematics, Marist College, Poughkeepsie, NY 12601, nducduy@gmail.com.},
 \and Nhu N. Nguyen\thanks{Department of Mathematics, University of Connecticut, Storrs, CT
06269, nguyen.nhu@uconn.edu. The research of this author was supported in part by the Air Force Office of Scientific Research.}, \and George Yin\thanks{Department of Mathematics, University of Connecticut, Storrs, CT
06269, gyin@uconn.edu. The research of this author was supported in part by the Air Force Office of Scientific Research.}}
\date{}
\maketitle

\begin{abstract}
This paper develops
stability and stabilization results for systems of fully coupled jump diffusions. Such  systems  frequently arise
in  numerous applications where each subsystem (component) is
operated under the influence of
other subsystems (components). This paper derives sufficient conditions under which the underlying
coupled jump diffusion is stable.
The results are then applied to investigate the stability of  linearizable jump diffusions, fast-slow coupled jump diffusions.
Moreover, 
weak stabilization of interacting systems and consensus of leader-following systems are examined.

\bigskip

\noindent {\bf Keywords.}  Coupled switching jump diffusion, Lyapunov exponent, stability, stabilization.

\bigskip
\noindent {\bf
AMS subject classifications.} 60J60, 60J76, 93E03, 93E15.
\end{abstract}

\newpage
\section{Introduction}\label{sec:int}
Nowadays, networked systems have posed unprecedented opportunities as well as challenges. Such systems have
numerous
applications in control
engineering, wireless communications, mathematical biology, financial engineering,
and actuarial science.
In many
stochastic  networked systems, subsystems and/or components are intertwined and highly coupled. Moreover, 
empirical studies reveal that there exists sudden rapid moments in the mid quotes of
stock prices, i.e., jumps during trading periods
\cite{BatesJumps,LeeDetect}.
Treating competitive Lotka-Volterra populations, as observed in \cite{BMYY},
 the population may suffer sudden environmental shocks such as earthquakes, hurricanes, epidemics, etc. The commonly used diffusion-type  stochastic Lotka-Volterra models  cannot explain such phenomena.
To allow sudden changes, systems of
stochastic differential equations
involving  L\'{e}vy process are often used
to capture
fluctuations as well as
jumps caused by rare events.

When a system has been operated for a long time, its long-time behavior becomes an important feature.
Stability of such
systems has therefore been studied extensively.
Given a system with coupled components/subsystems, can we derive the stability of one specific component based on the dynamics of the other components?
Take
for instance, a system involving fast-slow components,
one uses different time scales to portrait the fast-slow motions.
 A question is almost immediate. How can we determine the stability of the slow subsystem based on the
information of
the fast subsystem?
In addition, for a coupled system of jump diffusions, can we design a feedback control so as to obtain the desired stability?
This work aims to examine these questions.


Previous works on
stability of jump-diffusion processes can be found in
\cite{Wee99} for multidimensional jump-diffusion processes, \cite{AB02} for constrained jump-diffusion processes,
\cite{FTT10} for jump diffusions in a Hilbert space, \cite{CWZZ17,CCTY19,CCTY19a,YX10,ZWYJ14} for regime-switching jump diffusions, \cite{BGR18} for jump diffusions with state-dependent densities.
In contrast to the existing works in the literature,
 this work focuses on stability of fully coupled jump diffusions, where two jump-diffusion components interact with each other.
Such
systems have a
wide range of applications to numerous physical, engineering, and biological problems such as chemical
reactors \cite{MADF10}, power
transmission lines \cite{DTV14}, flow regulation in deep mines \cite{WD10}, elastic beams linked to rigid
bodies \cite{Lit98}, blood flow model \cite{CZ04,FM05}, mitochondrial swelling \cite{EOE20},  to mention  just a few
among others. In such a situation, we are  interested in the stability, averaging phenomena
under the
influence of the
interacting processes in the environment.
From a technical point of view, not only do
the coupled systems  possess many interesting properties,
 but also present many challenges;
  see coupled ordinary differential and partial differential equations (ODEs-PDEs) \cite{AA13,KT16,LMP11,R08}, coupled diffusion or stochastic differential equations \cite{HL20,HL16,Puh16}, and coupled stochastic reaction-diffusion or stochastic partial differential equations \cite{Cer11,CF09,KW13}.
The motivations and urgent need
in both theory and applications motivate the current work.

Let $\RX$ and $\RY$ be two Euclidean spaces of dimensions $\ell_1>0$ and $\ell_2>0$, respectively. Denote by $\0$ a zero vector with appropriate dimension (which will be clear from the context).
We assume $\BX_1(t)$ and
$\BX_2(t)$ are coupled
jump-diffusion processes in $\RX$, $\RY$, respectively.
More specifically, the pair $(\BX_1(t), \BX_2(t))$ is the solution of
 the system
\begin{equation}\label{e2.3}
\begin{cases}
\displaystyle d\BX_1(t)=b_1(\BX_1(t),\BX_2(t))dt+\sigma_1(\BX_1(t),\BX_2(t))d\BW_1(t)+\int_\Ra \gamma_1(\BX_1(t-),\BX_2(t-), \bphi)\wN_1(dt,d\bphi), \\
\displaystyle d\BX_2(t)=b_2(\BX_1(t),\BX_2(t))dt+\sigma_2(\BX_1(t),\BX_2(t))d\BW_2(t)+\int_\Rb \gamma_2(\BX_1(t-),\BX_2(t-), \bphi)\wN_2(dt,d\bphi),\\
\BX_1(0)=\bx_1,\quad \BX_2(0)=\bx_2,
\end{cases}
\end{equation}
where,
for $i=1,2$, $\BW_i(t)$ are standard $\R^{d_i}$-valued Brownian motions; $\bold N_i(dt, d\bphi)$'s
are Poisson random measures independent
of $\BW_1(t)$, $\BW_2(t)$, and $\wN_i(dt, d\bphi) = \bold N_i(dt, d\bphi) - \bnu_i(d\bphi)dt$ are the compensated Poisson random measures on
$[0, \infty) \times \R_*^{n_i}$ with $\mathbb R_*^{n_i}:=\mathbb R^{n_i}\setminus\{\mathbf 0\}$;
$b_1:\RX\times\RY\to\RX$ and
$b_2:\RX\times\RY\to\RY$ are smooth functions;
$\sigma_1: \RX_1 \times \RY_2 \to \R^{\ell_1 \times d_1}$
and $\sigma_2: \RX_1 \times \RY_2 \to \R^{\ell_2 \times d_2}$;
$\gamma_1:\RX\times\RY\times\R_*^{n_1}\to\RX$,
$\gamma_2:\RX\times\RY\times\R_*^{n_2}\to\RY$ are measurable functions.
In this paper, it is assumed that
$$
b_2(\bx_1,\0)=\0,\ \sigma_2(\bx_1,\0)=\0,\  \gamma_2(\bx_1,\0,\bphi)=\0,
$$
and thus $\0$ is an equilibrium point of $\BX_2(t)$.
We wish to derive conditions that are
mild
 under which the equilibrium point $\0$ or the
 trivial solution of
  $\BX_2(t)$ is stable.

In what follows,
the assumptions for the stability are given in term of Lyapunov functions but the insight and intuition are derived
from a dynamic system point of view combined with the averaging principle and ergodicity of coupled systems.
We demonstrate that the conditions are easily
applicable.
The intuition and idea are that if the 
interacting 
process $\BX_1(t)$ on the boundary (i.e., when $\BX_2(t)=\0$) admits a unique invariant measure and the corresponding decoupled (averaging) equations (of the process $\BX_2(t)$) at the stationary distribution of $\BX_1(t)$ satisfies some appropriate stability conditions then $\BX_2(t)$ is also stable.
Taking the idea from a dynamic system point of view, the stability conditions are obtained by considering the Lyapunov exponent of the process $\BX_2(t)$ corresponding to the invariant measure of the interacting process $\BX_1(t)$ on the boundary. 
Such conditions coincide with the intuition that when the main process $\BX_2(t)$ is close to the equilibrium $\0$, the interacting process is close to the solution on the boundary.
 Thus the stability conditions of the main process only need to be based on the information of the interacting process on the boundary, which
however,
poses great challenges. We need to reveal the behavior of the system around the boundary. Since two components are fully coupled,
handling their interactions and analyzing their behavior
require a careful analysis.
To overcome the difficulties, one of the main tools  used in this paper
is the coupling method \cite{CL89}.

With the
stability results at our hands, we study 
systems that commonly arise in applications.
In particular,
one of the main questions one would like to answer is: What are the relationships between nonlinear systems and the associated
linearized systems for jump diffusions? 
We address this question and provide  sufficient conditions for the stability of linearizable jump diffusions.
In various applications, the subsystems and/or components often display different time-scales. It is often necessary to treat
fast-slow coupled jump diffusions.
We provide sufficient conditions for  stability of the slow component based only on the limit system.
Next, we design stabilizing strategies in a coupled system when only the interacting process is available to be controlled.
Finally, 
leader-following systems are studied and 
conditions for the consensus controllability of the underlying systems are given.

The rest of paper is arranged as follows. Section \ref{sec:main} presents the main results on stability. 
Section \ref{sec:app1} deals with linearizable systems and systems with fast and slow components. Section \ref{sec:stabl} focuses on stabilization and treats consensus problems. Finally,
Section \ref{sec:con} concludes the paper and issues
further remarks.

\section{Stability of Coupled Jump Diffusions 
}\label{sec:main}
In this paper, we use $|\cdot|$ to denote the Euclidean norm for either vectors or matrices, 
and
$A^\top$ the transpose of a vector or a matrix $A$. For two real numbers
$a$ and $b$,  $a\vee b= \max(a, b)$, and $a\wedge b = \min(a, b)$.
 Denote $\BZ=(\BX_1^\top,\BX_2^\top)^\top$, $\bz=(\bx_1^\top,\bx_2^\top)^\top$, $b=(b_1^\top,b_2^\top)^\top$, $\sigma=(\sigma_1^\top,\sigma_2^\top)^\top$, $\gamma=(\gamma_1^\top,\gamma_2^\top)^\top$, $\BW=(\BW_1^\top,\BW_2^\top)^\top$, and $\wN=(\wN_1^\top,\wN_2^\top)^\top$.
 We will use $\BZ$ and $(\BX_1,\BX_2)$ exchangeably.  Moreover, a function of $(\bx_1,\bx_2)$ can often be written as a function of $\bz$ with $\bz=(\bx_1^\top,\bx_2^\top)^\top$, which will be clear from the context.
Note that the equation \eqref{e2.3}
 in vector form becomes
$$
d\BZ(t)=b(\BZ(t))dt+\sigma(\BZ(t))d\BW(t)+\int_{\Ra\times\Rb} \gamma(\BZ(t-), \bphi)\wN(dt,d\bphi),\quad
\BZ(0)=\bz.
$$
We will use $\PP_{\bz}$ (and/or $\E_{\bz}$) to indicate the initial conditions of the system.

Next, define the operator $\op$ by
$$
\begin{aligned}
\op g(\bz):=&[\partial_\bz g(\bz)]^\top b(\bz)+\frac 12\text{tr}[\sigma(\bz)\sigma^\top(\bz)\partial^2_{\bz}g(\bz)]\\
&+\int_{\Ra\times\Rb}[g(\bz+\gamma(\bz,\bphi))-g(\bz)-(\partial_\bz g)^\top\gamma(\bz,\bphi)]\bnu(d\bphi),
\end{aligned}
$$
for $g\in\D_\op$, where $\partial_\bz$ and $\partial^2_{\bz}$ denote the gradient and Hessian matrix with respect to $\bz$, respectively, and
$$
\begin{aligned}
\ad \D_{\op}:=
\big\{g:\RX\times\RY\to\R: g(\bz)
\text{ is twice continuously differentiable}
 \text{ and }\\
\aad
\int_{\Ra\times\Rb}|g(\bz+\gamma(\bz,\bphi))-g(\bz)-\partial_\bz g\cdot\gamma(\bz,\bphi)|\nu(d\bphi)<\infty
\big\}.
\end{aligned}
$$
It is noted that $\bz$ in $\op g(\bz)$ represents the variable of $\op g$ rather than the variable of $g$. Indeed, later $g$ can be plugged in by either functions of $\bx_1$ or functions of $\bx_2$.
For example, if $g$ is a function of $\bx_1$ only, the gradient of $g$ (with respect to $\bz$) will be $([\partial_{\bx_1}g(\bx_1)]^\top,\0^\top)^\top$. However, because the coefficients are fully coupled, $\Lom g$ is
a function of $\bz$;  we still write it as $\Lom g(\bz)$.
[This example is in fact associated to the generalized It\^o formula (given
below) for the first component.]


The following result is known as the generalized It\^o formula (see e.g., \cite{Sko89,YX10})
$$
\begin{aligned}
g(\BZ(t))-g(\BZ(0))=&\int_0^t \op g(\BZ(s-))ds+\int_0^t \partial_{\bz}g(\BZ(s-))\sigma(\BZ(s-))d\BW(s)\\
&+\int_0^t\int_{\Ra\times\Rb}\left[g(\BZ(s-)+\gamma(\BZ(s-),\bphi))-g(\BZ(s-))\right]\wN(ds,d\bphi).
\end{aligned}
$$
To ensure the existence and uniqueness of the solution, we impose the following assumption.

\begin{asp}\label{asp1} {\rm There are some constants $K_1, K_2>0$ such that $\forall \bz_1,\bz_2\in\RX\times\RY$
\bed\label{a.eq1}
|b(\bz_1)-b(\bz_2)|^2+|\sigma(\bz_1)-\sigma(\bz_2)|^2+\int_{\Ra\times\Rb}|\gamma(\bz_1,\bphi)-\gamma(\bz_2,\bphi)|^2\bnu(d\bphi)\leq K_1|\bz_1-\bz_2|^2,
\eed
and
\bed\label{a.eq2}
\int_{\Ra\times\Rb}|\gamma(\bz,\bphi)|^2\bnu(d\bphi)\leq K_2(1+|\bz|^2).
\eed
}\end{asp}

\begin{rem}
	Assumption \ref{asp1} can be replaced
with local Lipschitz conditions together with a suitable condition imposed on a Lyapunov function.
\end{rem}

To investigate the stability of the coupled jump diffusion system, we need the following assumptions.

\begin{asp}\label{asp2}
{\rm The following conditions hold.
\begin{itemize}
\item[]{\rm (i)}
	There exist positive functions $V_0, V_1: \RX\mapsto\R_+$ satisfying
\bed\label{a.eq3}
\op V_0(\bz)\leq K_3- K_4V_1(\bx_1),\;\forall \bz=(\bx_1,\0),
\eed
for some constants $K_3,K_4>0$.
\item[]{\rm (ii)}
There exist a function $U:\RY\mapsto\R_+$ and a constant $m_0>0$ such that
\bed\label{a.eq5}
\lim_{\bx_2\to\0} U(\bx_2)=\infty,\quad
U(\bx_2) -U(\bx_2')\leq m_0\ln\frac{|\bx_2'|}{|\bx_2|},\;\forall \bx_2,\bx_2'\neq\0.
\eed
Moreover, assume that there are a function $f_1:\RX\to\R$, and a Lipschitz function $f_2:\RX\to\R_+$, constants $\Delta_0>0$, $\alpha_0>0$,  and $K_5>0$ such that
\bed\label{a.eq6}
\op U(\bz)\geq f_1(\bx_1),\forall \bz=(\bx_1,\bx_2), |\bx_2|\leq \Delta_0,
\eed
and 
\bed\label{a.eq8}
\begin{aligned}
|U_{\bx_2}\sigma_2(\bz)|^2&+\int_\Rb\Big[\exp\Big\{-\alpha_0\big(U(\bx_2+\gamma_2(\bz,\bphi))-U(\bx_2)\big)_+\Big\}\Big]\bnu_2(d\bphi)\\
&\leq f_2(\bx_1),\;\forall \bz=(\bx_1,\bx_2), |\bx_2|\leq \Delta_0,
\end{aligned}
\eed
where $U_{\bx_2}$ denotes the gradient of $U$;
and
\bed\label{a.eq9}
|f_1(\bx_1)|+f_2(\bx_1)<K_5 V_1(\bx_1).
\eed

	\item[]{\rm (iii)} When $\bx_2=\0$ (yielding $\BX_2(t)=\0$),  the corresponding system for $\BX_1$
	$$
	d\BX_1(t)=b_1(\BX_1(t),\0)dt+\sigma_1(\BX_1(t),\0)d\BW_1(t)+\int_\Ra \gamma_1(\BX_1(t-),\0, \bphi)\wN_1(dt,d\bphi),
	$$
	 admits a unique invariant measure $\mu^*$ and
	$$
	\Lambda_1:=\int f_1(\bx_1)\mu^*(d\bx_1)>0.
	$$
	
\end{itemize}
}
\end{asp}
\begin{asp}\label{asp4}
{\rm Suppose that $\sigma_1(\bx_1,\bx_2)$ admits a right inverse $\sigma_1^{-1}(\bx_1,\bx_2)$ such that	$$\|\sigma_1^{-1}(\bx_1,\bx_2)\|\leq c_\sigma<\infty\text{ for all }|\bx_2|\leq\Delta_0,$$
	where $\Delta_0$ is as in Assumption \ref{asp2}.
Moreover, assume that	
there is a constant $K_6$ such that $\forall \bz=(\bx_1^\top,\bx_2^\top)^\top,\bz'=((\bx_1')^\top,(\bx_2')^\top)^\top\in\RX\times\RY$
	$$(b_1(\bz)-b_1(\bz'))^\top(\bx_1-\bx_1') +|\sigma_1(\bz)-\sigma_1(\bz')|^2\leq K_6|\bz-\bz'|^2.$$
In addition,
	$\gamma_1$ and $\gamma_2$ are Lipschitz in $\bz$ and uniformly in $\bphi$.}
\end{asp}

\begin{rem}
	Assumption \ref{asp2} is the main assumption for stability. This assumption is rather mild and
not restrictive.
	The functions $U$ and $f_1$ in Assumption \ref{asp2}(ii) are used to estimate the Lyapunov exponent of $\BX_2(t)$; $\Lambda_1$ in Assumption \ref{asp2}(iii) is a bound of the Lyapunov exponent.
	A simple but promising candidate of
	$U(\cdot)$ that satisfies the proposed conditions is $U(\bx_2)=(-\ln |\bx_2|)\vee 0.$
	Assumption \ref{asp2}(i) and \eqref{a.eq9} guarantee that the bound $\Lambda_1$ of the Lyapunov exponent is well-defined.
	Assumption \ref{asp4} collects a
	  form
	  of strong non-degenerate condition of the diffusion and some technical conditions.
	It can be seen that
	the conditions are applicable to
	many systems in applications;
		see Example \ref{ex-1} below as well as Section \ref{sec:app1}.

\end{rem}

Now, we state our main results.

\begin{thm}\label{thm-main}
	For any $\bx_1\in\RX$ and $\eps>0$, there exists $\deltaxe>0$ such that
	if $|\bx_1-\wdt\bx_1|+|\wdt\bx_2|\leq\deltaxe$,
	\begin{equation}\label{eq-thm-1}
	\PP_{\wdt\bz}\Big\{\liminf_{t\to\infty}\frac{U(\BX_2(t))}t\geq m_0\gamma_0\Big\}\geq 1-\eps,
	\end{equation}
	where $\wdt\bz=(\wdt\bx_1^\top,\wdt\bx_2^\top)^\top$; and thus,
	\begin{equation}\label{eq-thm-2}
	\PP_{\wdt\bz}\Big\{\limsup_{t\to\infty}\frac{\ln|\BX_2|}t\leq -\gamma_0\Big\}\geq 1-\eps.
	\end{equation}
\end{thm}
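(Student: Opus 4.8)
The plan is to run the Lyapunov exponent (stochastic averaging) argument on the test function $U$. Fix $\delta\in(0,\Delta_0]$, to be chosen small, and the exit time $\wtau_\delta:=\inf\{t\ge0:|\BX_2(t)|>\delta\}$. Applying the generalized It\^o formula to $U(\BX_2(\cdot))$ (a function of $\bx_2$ only) and using $\op U(\BZ(s))\ge f_1(\BX_1(s))$ from \eqref{a.eq6}, one gets, for $t<\wtau_\delta$,
\[
U(\BX_2(t))\ \ge\ U(\BX_2(0))+\int_0^{t}f_1(\BX_1(s))\,ds+M^{c}(t)+M^{d}(t),
\]
with $M^{c}(t)=\int_0^{t}U_{\bx_2}(\BX_2(s))\sigma_2(\BZ(s))\,d\BW_2(s)$ and $M^{d}(t)=\int_0^{t}\!\int_\Rb\big[U(\BX_2(s-)+\gamma_2(\BZ(s-),\bphi))-U(\BX_2(s-))\big]\wN_2(ds,d\bphi)$. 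So it suffices to produce, for each $\eps>0$, an event of probability $\ge1-\eps$ (once $|\bx_1-\wdt\bx_1|+|\wdt\bx_2|$ is small) on which simultaneously (a) $\wtau_\delta=\infty$; (b) $\liminf_{t\to\infty}\tfrac1t\int_0^{t}f_1(\BX_1(s))\,ds\ge\Lambda_1$ (up to an arbitrarily small loss); and (c) $M^{c}(t)/t\to0$ and $M^{d}(t)/t\to0$. Then \eqref{eq-thm-1} holds with $m_0\gamma_0=\Lambda_1$, and \eqref{eq-thm-2} follows: \eqref{a.eq5} with $\bx_2=\BX_2(t)$, $\bx_2'=\wdt\bx_2$ gives $m_0\ln|\BX_2(t)|\le m_0\ln|\wdt\bx_2|+U(\wdt\bx_2)-U(\BX_2(t))$, so dividing by $t$ and letting $t\to\infty$ converts \eqref{eq-thm-1} into \eqref{eq-thm-2}.

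For (b) I would use the coupling method of \cite{CL89}, comparing $\BX_1$ with the boundary diffusion $\BX_1^{0}$ of Assumption~\ref{asp2}(iii) started near $\supp\mu^{*}$. On a fixed horizon, It\^o applied to $|\BX_1(t)-\BX_1^{0}(t)|^{2}$ with the one-sided Lipschitz estimate and the Lipschitz continuity of $\gamma_1$ in Assumption~\ref{asp4}, together with $|\bz-\bz'|^{2}=|\BX_1-\BX_1^{0}|^{2}+|\BX_2|^{2}$ (as $\bx_2'=\0$), yields by Gronwall that $\E\big[\sup_{s\le T}|\BX_1(s)-\BX_1^{0}(s)|^{2}\1_{\{\wtau_\delta>T\}}\big]$ is small once $\delta$ and $|\bx_1-\BX_1^{0}(0)|$ are small, while the bounded right inverse of $\sigma_1$ (strong nondegeneracy, Assumption~\ref{asp4}) is what makes $\BX_1^{0}$ mix to the unique invariant measure $\mu^{*}$, hence makes the occupation measure of $\BX_1$ close to $\mu^{*}$ while $\BX_2$ is confined. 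Since $\Lambda_1=\int f_1\,d\mu^{*}>0$, the net effect is: on a block $[kT,(k+1)T]$ of fixed large length $T$, $\tfrac1T\int_{kT}^{(k+1)T}f_1(\BX_1(s))\,ds\ge\Lambda_1-\eta$ with conditional probability close to $1$ whenever $\BX_2(kT)\in\{|\bx_2|\le\delta\}$ and $\BX_2$ stays there on the block.

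The martingale bounds in (c) localize to the same blocks. For $M^{c}$, $\langle M^{c}\rangle_t=\int_0^{t}|U_{\bx_2}\sigma_2(\BZ(s))|^{2}\,ds\le\int_0^{t}f_2(\BX_1(s))\,ds\le K_5\int_0^{t}V_1(\BX_1(s))\,ds$ by \eqref{a.eq8} and \eqref{a.eq9}, and Assumption~\ref{asp2}(i) applied to $V_0$ (extended to a neighbourhood of the boundary by continuity of the coefficients) gives $\limsup_t\tfrac1t\int_0^{t}V_1(\BX_1(s))\,ds<\infty$; hence $\langle M^{c}\rangle_t=O(t)$ and $M^{c}(t)/t\to0$ by the strong law for martingales. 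The jump part $M^{d}$ is the delicate one, since the jumps of $U$ toward $\0$ may be large; here \eqref{a.eq8} is used as designed: for $\alpha_0>0$ it bounds (by $f_2(\BX_1)$, hence ultimately by $K_5V_1(\BX_1)$) the predictable compensator of $\exp\{-\alpha_0(U(\BX_2(t))-U(\BX_2(0)))\}$ — the downward jumps of $U$ being bounded, by $m_0\ln(1+L)$ with $L$ a Lipschitz constant of $\gamma_2$ — giving an exponential supermartingale, from which a Chebyshev estimate along integer times and a maximal inequality on unit intervals yield $M^{d}(t)/t\to0$, and, combined with (b), the one-block estimate $U(\BX_2((k+1)T))-U(\BX_2(kT))\ge(\Lambda_1-\eta)T$ and $\inf_{s\in[kT,(k+1)T]}\big(U(\BX_2(s))-U(\BX_2(kT))\big)\ge-\tfrac12(\Lambda_1-\eta)T$ with conditional probability $\ge1-\eps\,2^{-k-1}$ whenever $\BX_2(kT)\in\{|\bx_2|\le\delta\}$.

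It remains to assemble these into a confinement/Borel--Cantelli step. By \eqref{a.eq5}, an increase of $U$ over a block forces $|\BX_2((k+1)T)|\le|\BX_2(kT)|$, and $U$ is bounded on $\{|\bx_2|=\delta\}$; so, starting from $\wdt\bx_2$ small enough that $U(\wdt\bx_2)$ exceeds that bound, the one-block event above propagates for all $k\ge0$, its intersection over $k$ has probability $\ge1-\eps$, on it $\wtau_\delta=\infty$, and $\liminf_{t\to\infty}U(\BX_2(t))/t\ge\Lambda_1-\eta$; as $\eta>0$ is arbitrary this yields \eqref{eq-thm-1}, with $\deltaxe$ depending on $\bx_1$ and $\eps$. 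The main obstacle is the circular dependence between ``$\BX_2$ remains near $\0$'' and ``$\BX_1$ behaves like the ergodic boundary diffusion'': neither is available without the other, which is why the proof must be organized as a self-propagating block estimate with summable exceptional probabilities rather than a single ergodic/Gronwall step; the secondary difficulty is controlling $M^{d}$ with its possibly large upward jumps, which is exactly what the tailor-made exponential condition \eqref{a.eq8} is for.
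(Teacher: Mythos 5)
There is a genuine gap, and it sits exactly at the point you yourself flag as ``the main obstacle.'' Your item (b) proposes to couple the true first component $\BX_1$ of \eqref{e2.3} to the boundary process of Assumption \ref{asp2}(iii) by applying It\^o to $|\BX_1(t)-\BX_1^{0}(t)|^{2}$ and invoking the one-sided Lipschitz bound of Assumption \ref{asp4} plus Gronwall. But that bound only gives $d\,\E|\BX_1-\BX_1^{0}|^{2}\le K_6\big(\E|\BX_1-\BX_1^{0}|^{2}+\E|\BX_2|^{2}\big)\,dt$, hence an estimate that grows like $e^{K_6 t}$; two copies driven by the same noise with no restoring force simply drift apart, so the coupling is useless beyond a fixed horizon. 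Your proposed repair --- restarting on blocks $[kT,(k+1)T]$ of fixed length $T$ and demanding the one-block ergodic-average event $\frac1T\int_{kT}^{(k+1)T}f_1(\BX_1)\,ds\ge\Lambda_1-\eta$ with conditional probability $\ge 1-\eps 2^{-k-1}$ --- cannot work as stated: for fixed $T$ the failure probability of a finite-horizon ergodic average is a fixed positive quantity depending on the (uncontrolled, non-compactly supported) location of $\BX_1(kT)$, and nothing in the hypotheses makes it uniform in that location, let alone geometrically small in $k$. The exponential-martingale terms can be pushed to summable failure probabilities at linear cost, but the ergodic part cannot.

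The paper's resolution, which your proposal does not contain, is to change the dynamics rather than restart the coupling: one runs the auxiliary system \eqref{e2.4} in which $\wdt\BX_1$ carries an extra attracting drift $\lambda(\BX_1(t)-\wdt\BX_1(t))$ toward the genuine boundary process $\BX_1$. With $\lambda$ large this reverses the sign in the Gronwall step and yields the \emph{global-in-time} contraction $\E\sup_{t\le\wdt\tau_\delta}e^{\lambda_0 t}|\BX_1(t)-\wdt\BX_1(t)|^{2}\le\wdt C(|\bx_1-\wdt\bx_1|+\delta)^{2}$ (Lemma \ref{lm3}), so the time averages of $f_1,f_2$ along $\wdt\BX_1$ inherit, for all time, the a.s.\ ergodic limits $\Lambda_1,\Lambda_2$ of the single boundary trajectory --- no uniformity over restart points is ever needed. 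The price is that $(\wdt\BX_1,\wdt\BX_2)$ is not the original system; this is repaid by the Cameron--Martin--Girsanov theorem with drift correction $v(t)=\lambda\sigma_1^{-1}(\BX_1(t),\0)(\BX_1(t)-\wdt\BX_1(t))$, whose $L^2$ norm is small by the same contraction, so the Radon--Nikodym derivative is close to $1$ and the high-probability conclusion transfers to the law of \eqref{e2.3}. Note also that the bounded right inverse of $\sigma_1$ in Assumption \ref{asp4} exists precisely to make this $v$ well defined and small --- not, as you suggest, to ``make $\BX_1^{0}$ mix,'' which is simply assumed in Assumption \ref{asp2}(iii). Your treatment of the martingale terms via \eqref{a.eq8} and the exponential martingale inequality, and the derivation of \eqref{eq-thm-2} from \eqref{eq-thm-1} via \eqref{a.eq5}, do match the paper; but without the added drift and the Girsanov step the ergodic input (b) is not obtainable, so the argument does not close.
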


\begin{exam}\label{ex-1}
	To illustrate our results, let us provide a simple example. Consider a stochastic SIR epidemic model with Beddington-DeAngelis incidence rate given by the following SDEs with jumps,
	\begin{equation}
	\begin{cases}
	dS(t)=\Big[c_0-c_1S(t)- \frac{c_3S(t)I(t)}{c_4+c_5S(t)+c_6I(t)}\Big]dt+ \sigma_1(S(t),I(t))dW_1(t)+\disp\int_\Ra \!\!
\gamma_1(S(t),I(t), \bphi)\wN_1(dt,d\bphi),\\[1ex]
	dI(t)=\Big[-c_2I(t)+\frac{c_3S(t)I(t)}{c_4+c_5S(t)+c_6I(t)}\Big]dt
+c_7I(t)dW_2(t)+I(t)\disp\int_\Ra \hat\gamma_2(\bphi)\wN_1(dt,d\bphi).
	\end{cases}
	\end{equation}
	In the above, $\sigma_1,\gamma_1,\hat\gamma_2$ are bounded functions and are such that $\sigma_1(s,i),\gamma_1(s,i,\bphi),\gamma_2(s,i,\bphi)=i\hat\gamma_2(\bphi)$ satisfy the technical conditions in Assumptions \ref{asp1} and \ref{asp4}.
	Now, we check the stability conditions (Assumption \ref{asp2}).
	It is easily verified that Assumption \ref{asp2}(i) is satisfied with $V_0(s)=V_1(s)=s$; Assumption \ref{asp2}(ii) is satisfied with
	$$U(i)=(-\ln i)\vee 0, \quad f_1(s)=c_2+\frac{c_7^2}2+\int_{\Rb} |\hat\gamma_2(\phi)|^2\bnu_2(d\bphi)-\frac{c_3s}{c_4+c_5s},$$
	and $f_2(s)$ is some large constant. Under certain conditions, the corresponding system when $I(t)=0$
	$$
	d\hat S(t)=(c_0-c_1\hat S(t))dt+\sigma_1(\hat S(t),0)dW_1(t)+\int_\Ra \gamma_1(\hat S(t),0, \bphi)\wN_1(dt,d\bphi)
	$$
	has a unique invariant measure $\mu^*$.
	Therefore,
	if
	$\Lambda_1:=\int f_1(s)\mu^*(ds)>0$, applying Theorem \ref{thm-main}, $I(t)$ is stable at $0$. Without jump, this result is consistent with the longtime characterization in stochastic SIR epidemic models in \cite{DDN19,DN20};
 it generalizes the results in the aforementioned references.
\end{exam}

To prove Theorem \ref{thm-main}, we begin with some
auxiliary lemmas.
Lemma \ref{lm1} provides a local boundedness (uniform in finite intervals) in probability of the solution and Lemma \ref{lm2} illustrates the continuity on initial value (at $\0$) in probability of $\sup_{t\in[0,T]}|\BX_2(t)|$, for any finite time $T$.

\begin{lm}\label{lm1}
	For any $T>0$, $\eps>0$, $R>0$, there exists $H_{1,T,\eps, R}>0$ such that
	$$\PP_\bz\Big\{\sup_{t\in[0,T]}|\BZ(t)|\leq H_{1,T,\eps, R}\Big\}>1-\eps,\text{ for all } |\bz|\leq R.$$
\end{lm}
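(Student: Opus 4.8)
\noindent\emph{Proof plan.} The plan is to obtain a uniform-in-$t$ a priori second-moment bound for the process stopped at an exit level, and then read off the statement from Chebyshev's inequality. First I would extract linear growth from Assumption \ref{asp1}: taking $\bz_2=\0$ in \eqref{a.eq1} gives $|b(\bz)|\le|b(\0)|+\sqrt{K_1}\,|\bz|$ and $|\sigma(\bz)|\le|\sigma(\0)|+\sqrt{K_1}\,|\bz|$, while \eqref{a.eq2} controls the jump term directly, so there is a constant $C_0>0$ depending only on $K_1,K_2$ and the values of $b,\sigma$ at $\0$ with
\[
2\bz^\top b(\bz)+\tr[\sigma(\bz)\sigma^\top(\bz)]+\int_{\Ra\times\Rb}|\gamma(\bz,\bphi)|^2\,\bnu(d\bphi)\ \le\ C_0\bigl(1+|\bz|^2\bigr),\qquad \bz\in\RX\times\RY.
\]

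Next I would apply the generalized It\^o formula to $g(\bz)=1+|\bz|^2$. Since $\partial_\bz g=2\bz$ and $\partial^2_\bz g=2I$, the left-hand side above is exactly $\op g(\bz)$, so $\op g(\bz)\le C_0\,g(\bz)$, and $g\in\D_{\op}$ by \eqref{a.eq2}. Writing $V(t)=g(\BZ(t))$, the It\^o formula gives $V(t)=V(0)+\int_0^t\op g(\BZ(s-))\,ds+M^c_t+M^d_t$, where $M^c_t=\int_0^t 2\BZ(s-)^\top\sigma(\BZ(s-))\,d\BW(s)$ and $M^d_t=\int_0^t\!\int_{\Ra\times\Rb}\bigl(|\BZ(s-)+\gamma(\BZ(s-),\bphi)|^2-|\BZ(s-)|^2\bigr)\wN(ds,d\bphi)$ are local martingales. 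Fix $H>0$, let $\beta_H=\inf\{t\ge0:|\BZ(t)|\ge H\}$, and let $(\tau_k)$ be a localizing sequence making $M^c,M^d$ genuine martingales. On $[0,\beta_H]$ one has $|\BZ(s-)|\le H$, so $\op g(\BZ(s-))\le C_0(1+H^2)$ is bounded; taking expectations in the It\^o identity stopped at $t\wedge\beta_H\wedge\tau_k$ removes the martingale terms and yields, for $u_k(t):=\E_\bz V(t\wedge\beta_H\wedge\tau_k)$, the inequality $u_k(t)\le V(0)+C_0\int_0^t u_k(s)\,ds$; Gronwall's lemma gives $u_k(t)\le V(0)e^{C_0 t}$, and letting $k\to\infty$ with Fatou's lemma gives $\E_\bz V(t\wedge\beta_H)\le(1+|\bz|^2)e^{C_0 t}$ for all $t\ge0$, a bound that is uniform in $H$.

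To conclude, I would observe that on $\{\beta_H\le T\}$ one has $|\BZ(T\wedge\beta_H)|=|\BZ(\beta_H)|\ge H$, hence
\[
H^2\,\PP_\bz\{\beta_H\le T\}\ \le\ \E_\bz\bigl[|\BZ(T\wedge\beta_H)|^2\1_{\{\beta_H\le T\}}\bigr]\ \le\ \E_\bz V(T\wedge\beta_H)\ \le\ (1+|\bz|^2)e^{C_0 T}.
\]
Since $\{\sup_{t\in[0,T]}|\BZ(t)|>H\}\subseteq\{\beta_H\le T\}$, for every $\bz$ with $|\bz|\le R$ we get $\PP_\bz\{\sup_{t\in[0,T]}|\BZ(t)|>H\}\le(1+R^2)e^{C_0 T}/H^2$, which is $<\eps$ as soon as $H\ge H_{1,T,\eps,R}:=\sqrt{2(1+R^2)e^{C_0 T}/\eps}$; this gives the claim. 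The argument is otherwise routine, and the only place requiring a little care is the second step, namely the double localization — using the exit time $\beta_H$ to keep the coefficients bounded together with a separate localizing sequence for the jump local martingale $M^d$, whose integrand need not be square-integrable against $\bnu$ under Assumption \ref{asp1} alone — followed by Fatou's lemma to dispose of the second localizer.
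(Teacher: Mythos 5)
Your proof is correct, and it is worth noting that it takes a slightly different (and more self-contained) route than the paper. The paper disposes of the lemma in two lines: it cites a standard result (Lemma 6.9 of Mao's book) to get the running-maximum moment bound $\E_\bz\sup_{t\in[0,T]}|\BZ(t)|<C_T(\bz)$ with $C_T(\bz)$ locally bounded in $\bz$, and then applies Markov's inequality directly to the supremum. That cited bound is obtained via maximal inequalities (Burkholder--Davis--Gundy for the Brownian integral and a Kunita-type inequality for the compensated Poisson integral). You instead avoid all maximal inequalities by the first-exit-time device: you bound $\E_\bz\bigl[1+|\BZ(t\wedge\beta_H)|^2\bigr]$ uniformly in $H$ via $\op(1+|\cdot|^2)\le C_0(1+|\cdot|^2)$, localization, and Gronwall, and then convert this into control of $\PP_\bz\{\beta_H\le T\}$ using $|\BZ(\beta_H)|\ge H$ on $\{\beta_H\le T\}$ (valid for c\`adl\`ag paths even if the process overshoots $H$ by a jump). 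Your handling of the two delicate points — the double localization needed because the jump integrand of $M^d$ is only known to be $\bnu$-integrable rather than square-integrable under Assumption \ref{asp1}--\eqref{a.eq2}, and the inclusion $\{\sup_{t\le T}|\BZ(t)|>H\}\subseteq\{\beta_H\le T\}$ — is sound. The trade-off is that your argument is longer but elementary and fully self-contained, while the paper's is shorter at the cost of importing a maximal-inequality lemma; both yield a constant $H_{1,T,\eps,R}$ with the required uniformity in $|\bz|\le R$.
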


\begin{proof}
Under Assumption \ref{asp1}, using a standard argument (see e.g., \cite[Lemma 6.9]{Mao97}),
we
obtain the following local boundedness
$$
\E_\bz\sup_{t\in[0,T]}|\BZ(t)|<C_T(\bz),
$$
where $C_T(\bz)$ is some finite constant depending on $T$  and $\bz$ that is locally bounded in $\bz$.
As a result, the Markov inequality implies that for any $H>0$,
$$\PP_\bz\Big\{\sup_{t\in[0,T]}|\BZ(t)|> H\Big\}\leq \frac{ \disp\E_\bz\sup_{t\in[0,T]}|\BZ(t)|}{H}<\frac{C_T(\bz)}{H},
$$
which yields that for any $T>0$, $\eps>0$, and $R>0$, there is an $H_{1,T,\eps,R}>0$ such that
$$\PP_\bz\Big\{\sup_{t\in[0,T]}|\BZ(t)|\leq H_{1,T,\eps, R}\Big\}>1-\eps,\text{ for all } |\bz|\leq R.$$
The proof is complete.
\end{proof}

\begin{lm}\label{lm2}
	For any $T>0$, $\eps>0$, $R>0$, and $\delta_1>0$, there exists $\delta_2=\delta_2(T,\eps,R,\delta_1)>0$ such that
	$$\PP_\bz\Big\{\sup_{t\in[0,T]}|\BX_2(t)|\leq \delta_1\Big\}>1-\eps,\text{ for all }|\bx_1|\leq R, |\bx_2|\leq \delta_2.$$
\end{lm}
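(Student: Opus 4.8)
The plan is to exploit that $\0$ is an equilibrium of the $\BX_2$-component — equivalently, that $b_2(\bx_1,\0)=\0$, $\sigma_2(\bx_1,\0)=\0$, $\gamma_2(\bx_1,\0,\bphi)=\0$ — to show that $\E_\bz\sup_{t\in[0,T]}|\BX_2(t)|^2$ is at most a constant times $|\bx_2|^2$, the constant depending only on $K_1$ and $T$; a Chebyshev estimate then closes the lemma with $\delta_2$ taken proportional to $\delta_1\sqrt{\eps}$.

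First I would record the linear bounds on the coefficients of the $\BX_2$-equation. Since $b_2$, $\sigma_2$, $\gamma_2$ are the lower blocks of $b$, $\sigma$, $\gamma$, inequality \eqref{a.eq1} applied to the pair $\bz_1=(\bx_1^\top,\bx_2^\top)^\top$, $\bz_2=(\bx_1^\top,\0^\top)^\top$ (for which $|\bz_1-\bz_2|=|\bx_2|$), together with $b_2(\bx_1,\0)=\0$ etc., gives
$$|b_2(\bx_1,\bx_2)|^2\le K_1|\bx_2|^2,\qquad |\sigma_2(\bx_1,\bx_2)|^2\le K_1|\bx_2|^2,\qquad \int_{\Rb}|\gamma_2(\bx_1,\bx_2,\bphi)|^2\bnu_2(d\bphi)\le K_1|\bx_2|^2,$$
and these bounds are uniform in $\bx_1$. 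Next, starting from the integral form of the $\BX_2$-equation in \eqref{e2.3}, I would take $\sup_{r\le t}$, square, use $\big(\sum_{i=1}^4 a_i\big)^2\le 4\sum_{i=1}^4 a_i^2$, and take expectations, estimating the drift term by the Cauchy--Schwarz inequality, the $d\BW_2$-integral by Doob's $L^2$ inequality and the It\^o isometry, and the $\wN_2$-integral by Doob's $L^2$ inequality and the isometry for compensated Poisson random measures. Inserting the three bounds above yields, for $\phi(t):=\E_\bz\sup_{r\in[0,t]}|\BX_2(r)|^2$,
$$\phi(t)\le 4|\bx_2|^2+C\int_0^t\phi(s)\,ds,\qquad 0\le t\le T,$$
with $C=C(K_1,T)$. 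The finiteness of $\phi$, needed to invoke Gronwall's inequality, follows from the standard second-moment bound $\E_\bz\sup_{t\le T}|\BZ(t)|^2<\infty$ obtained under Assumption \ref{asp1} by the argument used in the proof of Lemma \ref{lm1}; alternatively one may first run the same inequality for the stopped processes $\BX_2(\cdot\wedge\tau_n)$ with $\tau_n=\inf\{t:|\BX_2(t)|\ge n\}$ and let $n\to\infty$ by Fatou's lemma. Gronwall's inequality then gives $\E_\bz\sup_{t\in[0,T]}|\BX_2(t)|^2\le C_T|\bx_2|^2$ with $C_T:=4e^{CT}$, again independent of $\bx_1$.

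Finally, Chebyshev's inequality gives $\PP_\bz\{\sup_{t\in[0,T]}|\BX_2(t)|>\delta_1\}\le C_T|\bx_2|^2/\delta_1^2$, so it suffices to take $\delta_2:=\delta_1\sqrt{\eps/C_T}$; in fact the resulting bound holds for every $\bx_1\in\RX$, so the hypothesis $|\bx_1|\le R$ is not actually needed under Assumption \ref{asp1} (it would enter, together with a Lyapunov function, in the local-Lipschitz variant mentioned in the Remark after Assumption \ref{asp1}). There is no serious obstacle in this argument: the only points demanding care are the a priori integrability that legitimizes Gronwall's inequality (handled via Lemma \ref{lm1} or localization) and the use of the correct Doob/isometry estimates for the compensated-Poisson term; the conceptual crux is simply that $\0$ being an equilibrium forces the $\BX_2$-coefficients to be $O(|\bx_2|)$, which is exactly what makes $\sup_{[0,T]}|\BX_2|$ depend $L^2$-Lipschitz continuously on the initial datum $\bx_2$ at $\bx_2=\0$.
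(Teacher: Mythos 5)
Your proof is correct, but it takes a genuinely different route from the paper's. The paper proves Lemma \ref{lm2} through the Lyapunov-type function $U$ of Assumption \ref{asp2}(ii): it applies the generalized It\^o formula to $U(\BX_2(t))$, controls the Brownian and compensated-Poisson martingale terms from below via the exponential martingale inequality, bounds the resulting drift and quadratic-variation contributions by $V_1(\BX_1(u))$ using \eqref{a.eq6}--\eqref{a.eq9}, and invokes Lemma \ref{lm1} to keep that bound finite on $[0,T]$ with probability at least $1-\eps$ (this is where the hypothesis $|\bx_1|\le R$ enters); since $U(\bx_2)\to\infty$ as $\bx_2\to\0$, taking $|\bx_2|$ small forces $U(\BX_2(t))\ge L(\delta_1)$, hence $|\BX_2(t)|\le\delta_1$, throughout $[0,T]$. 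Your argument instead uses only Assumption \ref{asp1} together with the equilibrium property $b_2(\bx_1,\0)=\0$, $\sigma_2(\bx_1,\0)=\0$, $\gamma_2(\bx_1,\0,\bphi)=\0$, which does yield the uniform-in-$\bx_1$ bounds $|b_2(\bx_1,\bx_2)|^2\le K_1|\bx_2|^2$ and likewise for $\sigma_2$ and $\int_{\Rb}|\gamma_2(\bx_1,\bx_2,\bphi)|^2\bnu_2(d\bphi)$, and then Cauchy--Schwarz, Doob's $L^2$ inequality, the two isometries, and Gronwall give $\E_\bz\sup_{t\le T}|\BX_2(t)|^2\le C_T|\bx_2|^2$, after which Chebyshev finishes; the a priori integrability issue is correctly handled by localization or by the second-moment bound underlying Lemma \ref{lm1}. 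Your route is more elementary and quantitatively stronger: it gives $\delta_2\propto\delta_1\sqrt{\eps}$ and shows that, under Assumption \ref{asp1}, the restriction $|\bx_1|\le R$ is not needed. What the paper's route buys is robustness: it uses the $\BX_2$-coefficients only on $\{|\bx_2|\le\Delta_0\}$ and leans on $U$ and $V_1$ rather than on the global linear growth forced by the global Lipschitz condition, so it survives the weakening of Assumption \ref{asp1} to local Lipschitz conditions mentioned in the remark following that assumption, where your Gronwall constant would no longer be controlled. As the lemma is stated under Assumption \ref{asp1}, your proof is complete and valid.
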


\begin{proof}
Because of \eqref{a.eq5}, there is an $L=L(\delta_1)$ such that whenever $U(\bx_2)>L(\delta_1)$,
$|\bx_2|<\delta_1$.
By the generalized It\^o formula, we have
\bed\label{eq-1}
\begin{aligned}
U(\BX_2(t))=&U(\bx_2)+\int_0^t\op U(\BZ(u))du+\int_0^t U_{\bx_2}\sigma_2(\BZ(u-))d\BW_2(u)\\
&+\int_0^t\int_{\Rb} [U(\BX_2(u-)+\gamma_2(\BZ(u-),\bphi))-U(\BX_2(u-))]\wN_2(du,d\bphi).
\end{aligned}
\eed
Let $H(\bz,\bphi)=U(\bx_2+\gamma_2(\bz,\bphi))-U(\bx_2)$, $\bz=(\bx_1^\top,\bx_2^\top)^\top$, and $\alpha$ be such that $0<\alpha<\frac{\alpha_0}2$, where $\alpha_0$ is in Assumption \ref{asp2}.
	From the exponential martingale inequality (see e.g., \cite[Theorem 5.2.9]{App09}), we have that with
probability greater $1-\eps$ the following inequality holds for all $t\in [0,T]$,
	$$
	\begin{aligned}
	-\int_0^t&U_{\bx_2}(\BX_2(u))\sigma_2(\BZ(u))d\mathbf W_2(u)
	-\int_0^t\int_{\Rb} H(\BZ(u),\bphi)\wN_2(du,d\bphi)\\
	\leq& \frac\alpha2 \int_0^t|U_{\bx_2}(\BX_2(u))\sigma_2(\BZ(u))|^2du
	+\frac1\alpha\int_0^t\int_{\Rb} \Big(e^{\alpha H(\BZ(u),\bphi)}-1-\alpha H(\BZ(u),\bphi)\Big)\bnu_2(d\bphi)du-\frac{\ln\eps}\alpha\\
	\leq& \frac\alpha2 \int_0^t|U_{\bx_2}(\BX_2(u))\sigma_2(\BZ(u))|^2du
	+\frac{\alpha}2\int_0^t\int_{\Rb} H^2(\BZ(u),\bphi)e^{\frac{\alpha_0}2|H(\BZ(u),\bphi)|}\bnu_2(d\bphi)du-\frac{\ln\eps}\alpha\\
	\leq& \frac\alpha2 \int_0^t|U_{\bx_2}(\BX_2(u))\sigma_2(\BZ(u))|^2du
	+\frac{C_{\alpha_0}\alpha}2\int_0^t\int_{\Rb} e^{\alpha_0|H(\BZ(u),\bphi)|}\bnu_2(d\bphi)du-\frac{\ln\eps}\alpha,
	\end{aligned}
	$$
where $C_{\alpha_0}$ is some finite constant such that $t^2e^{\frac{\alpha_0t}2}\leq C_{\alpha_0}e^{\alpha_0t},\forall t\geq 0$.
As a result, we have
\begin{equation}\label{eq-2}
\begin{aligned}\PP\bigg\{&\int_0^tU_{\bx_2}(\BX_2(u))\sigma_2(\BZ(u))d\BW_2(u)
	+\int_0^t\int_{\Rb} H(\BZ(u),\bphi)\wN_2(du,d\bphi)\\
&\geq -\frac\alpha2 \int_0^t|U_{\bx_2}(\BX_2(u))\sigma_2(\BZ(u))|^2du-\frac{C_{\alpha_0}\alpha}2\int_0^t\int_{\Rb} e^{\alpha_0|H(\BZ(u),\bphi)|}\bnu_2(d\bphi)du+\frac{\ln\eps}\alpha,\;\forall t\in [0,T]\bigg\}\\
\geq& 1-\eps.
\end{aligned}
\end{equation}
We obtain from Assumption \ref{asp2} (ii) that
$$
\begin{aligned}
-\int_0^t &[\op U](\BZ(u))du+
\frac\alpha2 \int_0^t|U_{\bx_2}(\BX_2(u))\sigma_2(\BZ(u))|^2du+\frac{C_{\alpha_0}\alpha}2\int_0^t\int_{\Rb} e^{\alpha_0|H(\BZ(u),\bphi)|}\bnu_2(d\bphi)du\\
&\leq C_{\alpha,\alpha_0}\int_0^t V_1(\BX_1(u),\0)du.
\end{aligned}
$$
In addition, by Lemma \ref{lm1},  with  probability greater than $1-\eps$,  $|\BZ(t)|\leq H_{1,T,\eps, R},\;\forall t\in [0,T]$.
As a result, with  probability greater $1-2\eps$, we have
\begin{equation}\label{eq-3}
\begin{aligned}
-\int_0^t &[\op U](\BZ(u))du+
\frac\alpha2 \int_0^t|U_{\bx_2}(\BX_2(u))\sigma_2(\BZ(u))|^2du+\frac{C_{\alpha_0}\alpha}2\int_0^t\int_{\Rb} e^{\alpha_0|H(\BZ(u),\bphi)|}\bnu_2(d\bphi)du\\
&\leq C_{T,R,\eps,\alpha,\alpha_0},
\end{aligned}
\end{equation}
for some finite constant $C_{T,R,\eps,\alpha,\alpha_0}$.
The combination of \eqref{eq-1}, \eqref{eq-2}, and \eqref{eq-3} yields that with
probability greater $1-2\eps$,
$$
U(\BX_2(t))\geq U(\bx_2)-C_{T,R,\eps,\alpha,\alpha_0}+\frac{\ln\eps}\alpha,\quad\forall t\in[0,T].
$$
Because of \eqref{a.eq5}, there exists a $\delta_2>0$ such that for all $|\bx_2|<\delta_2$,
$$
U(\bx_2)-C_{T,R,\eps,\alpha,\alpha_0}+\frac{\ln\eps}\alpha>L(\delta_1).
$$
Therefore, with   probability greater $1-2\eps$, $
\sup_{t\in[0,T]}U(\BX_2(t))\geq L(\delta_1),
$ for all $|\bx_1|<R, |\bx_2|<\delta_2$,
and thus, for all $|\bx_1|<R, |\bx_2|<\delta_2$,
$
\sup_{t\in[0,T]}|\BX_2(t)|\leq \delta_1,\;\forall t\in[0,T].
$


\end{proof}

As was mentioned, one of the main challenges in this work is the coupled interaction of $\BX_1(t)$ and $\BX_2(t)$.
To overcome this difficulty, we apply and modify the coupling method \cite{CL89}.
Let $\lambda>0$ such that $\lambda>20(1+K_2)$ with $K_2$ being the Lipschitz constant in Assumption \ref{asp1}. Consider the following coupled equations
\begin{equation}\label{e2.4}
\begin{cases}
\displaystyle d\BX_1(t)=b_1(\BX_1(t),\0)dt+\sigma_1(\BX_1(t),\0)d\BW_1(t)+\int_\Ra \gamma_1(\BX_1(t),\0, \bphi)\wN_1(dt,d\bphi),\\
\displaystyle d\wdt\BX_1(t)=b_1(\wdt\BZ(t))dt+\lambda(\BX_1(t)-\wdt \BX_1(t))dt+\sigma_1(\wdt\BZ(t))d\BW_1(t)+\int_\Ra \gamma_1(\wdt\BZ(t-), \bphi)\wN_1(dt,d\bphi), \\
\displaystyle d\wdt\BX_2(t)=b_2(\wdt\BZ(t))dt+\sigma_2(\wdt\BZ(t))d\BW_2(t)+\int_\Rb \gamma_2(\wdt\BZ(t-), \bphi)\wN_2(dt,d\bphi),\\
\BX_1(0)=\bx_1, \wdt\BX_1(0)=\wdt \bx_1, \wdt\BX_2(0)=\wdt \bx_2,
\end{cases}
\end{equation}
where $\wdt \BZ(t)=([\wdt\BX_1(t)]^\top,[\wdt\BX_2(t)]^\top)^\top$, $\wdt\bz=(\wdt\bx_1^\top,\wdt\bx_2^\top)^\top$.
We will use $\PP_{\bx_1,\wdt\bz}$ to indicate the initial conditions of the coupled equations \eqref{e2.4}.

Denote  $\Lambda_2:=\int f_2(\bx_1)\mu^*(d\bx_1)>0$,
where $\mu^*$ is the unique invariant measure of $\BX_1(t)$ when $\BX_2(t)=0$ as in Assumption \ref{asp2} (iii).
Let $C_{\alpha_0}>1$ be a constant as in Lemma \ref{lm2}, i.e., a constant satisfying $t^2e^{\frac{\alpha_0t}2}\leq C_{\alpha_0}e^{\alpha_0t},\forall t\geq 0$.
In the remaining of this section,
let
$$\gamma_0\in(0,\frac{\Lambda_1}{m_0}),\; \varsigma_0=\frac{\Lambda_1-m_0\gamma_0}3>0,\; \alpha=\frac{\varsigma_0}{2C_{\alpha_0}\Lambda_2},\;\lambda_0\in (0,\frac{\gamma_0}4),$$
and for each $\delta>0$,
$$\wdt\tau_\delta:=\inf\{t>0: |\wdt \BX_2(t)|\geq \delta e^{-\gamma_0 t}\}.$$

To proceed, we present the following lemmas. Lemma \ref{lm3} provides estimates of the coupling time of the coupled processes, while Lemma \ref{lm4} handles the diffusion and jump parts.

\begin{lm}\label{lm3}
There is a universal constant $\wdt C>1$ such that
\begin{equation}\label{l3-eq-1}
\E\Big( \sup_{t\leq \wdt\tau_\delta} e^{\lambda_0 t}|\BX_1(t)-\wdt\BX_1(t)|^2\Big)\leq \wdt C(|\bx_1-\wdt \bx_1|+\delta)^2.
\end{equation}
As a result, for any $\eps>0$ one has
 $$\PP\bigg\{\int_0^{t\wedge\wdt\tau_\delta}|v(s)|^2ds\geq \frac{\left(|\bx_1-\wdt \bx_1|+\delta\right)^2}\eps\; \text{ for some } t\geq 0\bigg\}\leq\frac{\wdt C\lambda c_\sigma\eps}{\lambda_0},$$
where $c_\sigma$ is in Assumption \ref{asp2} and
\begin{equation}\label{equationvt}
v(t):= \lambda\sigma_1^{-1}(\BX_1(t),\0)(\BX_1(t)-\wdt \BX_1(t)).
\end{equation}
\end{lm}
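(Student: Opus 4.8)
The plan is to analyze the difference process $D(t):=\BX_1(t)-\wdt\BX_1(t)$ up to the stopping time $\wdt\tau_\delta$, where by definition $|\wdt\BX_2(t)|\leq\delta e^{-\gamma_0 t}$ holds on $[0,\wdt\tau_\delta]$. First I would write down the SDE satisfied by $D(t)$ from \eqref{e2.4}: the drift contributes $b_1(\BX_1,\0)-b_1(\wdt\BZ)-\lambda D(t)$, the diffusion contributes $(\sigma_1(\BX_1,\0)-\sigma_1(\wdt\BZ))d\BW_1$, and the jump term contributes $(\gamma_1(\BX_1,\0,\bphi)-\gamma_1(\wdt\BZ,\bphi))\wN_1(dt,d\bphi)$. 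Applying the generalized It\^o formula to $e^{\lambda_0 t}|D(t)|^2$, the dissipative term $-2\lambda e^{\lambda_0 t}|D(t)|^2$ coming from the coupling drift is the key mechanism: combined with the factor $\lambda>20(1+K_2)$ it dominates all the other contributions. For the remaining drift cross-term I would use the one-sided Lipschitz bound in Assumption \ref{asp4}, namely $(b_1(\bz)-b_1(\bz'))^\top(\bx_1-\bx_1')+|\sigma_1(\bz)-\sigma_1(\bz')|^2\leq K_6|\bz-\bz'|^2$, together with the Lipschitz property of $\gamma_1$, to control everything by a constant multiple of $e^{\lambda_0 t}|\BZ(t)-\wdt\BZ(t)|^2$ — and crucially, since the first argument of $b_1,\sigma_1,\gamma_1$ in the $\BX_1$ equation is evaluated at $(\BX_1,\0)$ while in the $\wdt\BX_1$ equation it is at $(\wdt\BX_1,\wdt\BX_2)$, we have $|\BZ(t)-\wdt\BZ(t)|\leq |D(t)|+|\wdt\BX_2(t)|\leq |D(t)|+\delta e^{-\gamma_0 t}$ on the relevant time interval. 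Choosing $\lambda_0<\gamma_0/4$ ensures $e^{\lambda_0 t}(\delta e^{-\gamma_0 t})^2\leq \delta^2 e^{-\lambda_0 t}$ remains integrable, and choosing $\lambda$ large enough (the constant $20(1+K_2)$ is tuned precisely for this) makes the net drift of $e^{\lambda_0 t}|D(t)|^2$ bounded by an integrable deterministic quantity plus a martingale.

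Next I would take suprema and expectations. Write $e^{\lambda_0 t}|D(t)|^2 = |D(0)|^2 + (\text{deterministic integrable remainder}) + M_t^{(1)} + M_t^{(2)}$, where $M^{(1)}$ is the Brownian stochastic integral and $M^{(2)}$ the compensated-Poisson integral. For the supremum over $t\leq\wdt\tau_\delta$ I would apply the Burkholder–Davis–Gundy inequality to each martingale part; the quadratic variations are again bounded, via the Lipschitz estimates and $|\BZ-\wdt\BZ|\leq|D|+\delta e^{-\gamma_0 t}$, by $\int_0^{t\wedge\wdt\tau_\delta} e^{\lambda_0 s}\big(|D(s)|^2+\delta^2 e^{-\lambda_0 s}\big)\,ds$ up to constants, and a Young's inequality / absorption argument moves the $\sup$ term to the left-hand side. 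This is the standard Gronwall-with-BDG scheme; the outcome is \eqref{l3-eq-1} with a universal $\wdt C>1$ depending only on $K_2,K_6$, the Lipschitz constant of $\gamma_1$, and the gap between $\lambda$ and $20(1+K_2)$. I would be slightly careful that $\wdt\tau_\delta$ is an honest stopping time and that all the stopped martingales are genuine martingales (localize if needed, then pass to the limit by Fatou / monotone convergence).

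For the second assertion, recall $v(t)=\lambda\sigma_1^{-1}(\BX_1(t),\0)(\BX_1(t)-\wdt\BX_1(t))$, so $|v(t)|^2\leq \lambda^2 c_\sigma^2 |D(t)|^2$ by the right-inverse bound $\|\sigma_1^{-1}(\bx_1,\0)\|\leq c_\sigma$ from Assumption \ref{asp4} (the bound holds since $|\wdt\BX_2|\leq\delta\leq\Delta_0$ on the interval, though here the first equation uses $\0$ in the second slot so the bound applies directly). Then $\int_0^{t\wedge\wdt\tau_\delta}|v(s)|^2\,ds \leq \lambda^2 c_\sigma^2 \int_0^{t\wedge\wdt\tau_\delta} |D(s)|^2\,ds \leq \lambda^2 c_\sigma^2 \int_0^\infty e^{-\lambda_0 s}\,ds \cdot \sup_{s\leq\wdt\tau_\delta} e^{\lambda_0 s}|D(s)|^2 = \frac{\lambda^2 c_\sigma^2}{\lambda_0}\sup_{s\leq\wdt\tau_\delta} e^{\lambda_0 s}|D(s)|^2$. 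Taking expectations and using \eqref{l3-eq-1} gives $\E\int_0^{t\wedge\wdt\tau_\delta}|v(s)|^2\,ds \leq \frac{\wdt C\lambda^2 c_\sigma^2}{\lambda_0}(|\bx_1-\wdt\bx_1|+\delta)^2$, uniformly in $t$; since the integrand is nonnegative and increasing in $t$, monotone convergence lets us replace $t\wedge\wdt\tau_\delta$ by $\wdt\tau_\delta$ (equivalently, the bound holds for the event "for some $t\geq 0$"). Markov's inequality applied to the nonnegative random variable $\int_0^{\wdt\tau_\delta}|v(s)|^2\,ds$ then yields $\PP\{\int_0^{t\wedge\wdt\tau_\delta}|v(s)|^2\,ds\geq (|\bx_1-\wdt\bx_1|+\delta)^2/\eps\text{ for some }t\}\leq \eps\,\wdt C\lambda^2 c_\sigma^2/\lambda_0$, which is the claimed estimate up to the precise form of the constant (the paper writes $\wdt C\lambda c_\sigma$; one absorbs the extra powers of $\lambda$ and $c_\sigma$ into $\wdt C$, or notes these are fixed parameters).

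The main obstacle I anticipate is the drift/martingale bookkeeping in the first step: one must verify that the choice $\lambda>20(1+K_2)$ together with $\lambda_0<\gamma_0/4$ really does make the post-It\^o-formula inequality close, i.e., that after using the one-sided Lipschitz bound (Assumption \ref{asp4}), expanding $|\BZ-\wdt\BZ|^2\leq 2|D|^2+2\delta^2 e^{-2\gamma_0 t}$, and invoking BDG on the two martingale parts, the coefficient of $\int e^{\lambda_0 s}|D(s)|^2\,ds$ on the right is strictly negative (or small enough to be absorbed), leaving only the harmless $\delta^2\int e^{-\lambda_0 s}\,ds$ term. The jump term requires a little extra care because $\E|M^{(2)}_t|$ involves $\int|\gamma_1(\BX_1,\0,\bphi)-\gamma_1(\wdt\BZ,\bphi)|^2$, which the Lipschitz-in-$\bz$-uniformly-in-$\bphi$ hypothesis in Assumption \ref{asp4} controls — but one should check there is no issue with the measure $\bnu_1$ being infinite (it is handled because we only need the $L^2$ bound \eqref{a.eq1}, not a bound on $\bnu_1(\Ra)$ itself). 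Everything else is routine Gronwall and Markov.
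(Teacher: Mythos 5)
Your proposal is correct and follows the same skeleton as the paper's proof (It\^o's formula applied to $e^{\lambda_0 t}|\BX_1(t)-\wdt\BX_1(t)|^2$, dissipativity from the coupling drift $-\lambda(\BX_1-\wdt\BX_1)$, maximal inequalities for the two martingale parts, and then for the second assertion the identical observation $\int_0^t h\,ds\le \lambda_0^{-1}\sup_s e^{\lambda_0 s}h(s)$ followed by Markov's inequality), but it diverges in how the martingale suprema are closed. The paper uses the $L^2$ forms of Kunita's and the Burkholder--Davis--Gundy inequalities, which produce $\big(\E\int_0^{T\wedge\wdt\tau_\delta}e^{2\lambda_0 s}|\BX_1-\wdt\BX_1|^4ds\big)^{1/2}$ in \eqref{eq-supN}--\eqref{eq-supW}, and therefore has to bootstrap through a separate fourth-moment estimate obtained by a second application of It\^o's formula to $e^{2\lambda_0 t}|\BX_1-\wdt\BX_1|^4$ (\eqref{eq-mu4}--\eqref{u-eq5}). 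You instead propose the $L^1$ BDG route: factor the quadratic variation as $\big(\sup_s e^{\lambda_0 s}|D(s)|^2\big)\cdot\int e^{\lambda_0 s}(\dots)ds$, apply Young's inequality, and absorb the supremum into the left-hand side, which keeps everything at the level of second moments and avoids the fourth-moment detour entirely. Two points need care for your route to close. First, after absorption you are left with $\eta\,\E\int_0^{T\wedge\wdt\tau_\delta}e^{\lambda_0 s}|D(s)|^2ds$ with $\eta$ proportional to the BDG constant; a plain Gronwall argument here would yield a constant growing in $T$, which is useless since $\wdt\tau_\delta$ may be infinite, so this term must be controlled uniformly in $T$ by the dissipative $-\lambda$ drift (i.e., by first taking plain expectations in the It\^o identity, exactly as the paper does for the fourth moment in \eqref{u-eq3}) — you gesture at this correctly but it is the one place where "standard Gronwall" is not literally the right word. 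Second, for the jump martingale the factorization of $[M]_T$ is against the raw Poisson measure $\bold N_1$, not $\bnu_1(d\bphi)ds$, so one needs both the $L^2(\bnu_1)$ Lipschitz bound \eqref{a.eq1} and the uniform-in-$\bphi$ Lipschitz property of $\gamma_1$ from Assumption \ref{asp4}; you flag the possible infiniteness of $\bnu_1$, which is exactly the right concern. Your remark that the paper's constant $\lambda c_\sigma$ in the second assertion should really read $\lambda^2 c_\sigma^2$ (harmlessly absorbed into $\wdt C$) is also accurate. Overall the proposal is sound and, in the martingale step, arguably more economical than the printed proof.
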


\begin{proof}
Use $C$ to denote a finite constant, whose values may change at difference appearances.
By the generalized It\^o
formula for jump diffusions, we have that
\begin{equation}\label{eq-main-11}
\begin{aligned}
&\!\!\! e^{\lambda_0t}\big|\BX_1(t)-\wdt\BX_1(t)\big|^2\\
& =\left|\bx_1-\wdt\bx_1\right|^2+\int_0^t e^{\lambda_0s}(\lambda_0-\lambda) \big|\BX_1(s)-\wdt\BX_1(t)\big|^2ds\\
&\ +2\int_0^t e^{\lambda_0s}\big(\BX_1(s)-\wdt\BX_1(s)\big)^\top\big(b_1(\BX_1(s),\0)-b_1(\wdt\BX_1(s),\wdt\BX_2(s))\big)ds\\
&\ +\int_0^t e^{\lambda_0s}\tr\Big[\big(\sigma_1(\BX_1(s),\0)-\sigma_1(\wdt\BX_1(s),\wdt\BX_2(s))\big)\big(\sigma_1(\BX_1(s),\0)-\sigma_1(\wdt\BX_1(s),\wdt\BX_2(s))\big)^\top\Big]ds\\
&\ +\int_0^t \! e^{\lambda_0s}\!\int_\Ra \! \Big[\big|\BX_1(s)-\wdt\BX_1(s)+\gamma_1(\BX_1(s-),\0,\bphi)
-\gamma_1(\wdt\BX_1(s),\wdt\BX_2(s),\bphi)\big|^2\!
-\!\big|\BX_1(s)-\wdt\BX_1(s)\big|^2\\
&\qquad\qquad-2\big(\BX_1(s)-\wdt\BX_1(s)\big)^\top \big(\gamma_1(\BX_1(s),\0,\bphi)-
\gamma_1(\wdt\BX_1(s),\wdt\BX_2(s),\bphi)\big)\Big]\bnu_1(d\bphi)ds\\
&+2\int_0^t e^{\lambda_0s}\big(\BX_1(s-)-\wdt\BX_1(s-)\big)^\top\big(\sigma_1(\BX_1(s-),\0)-\sigma_1(\wdt\BX_1(s-),\wdt\BX_2(s-))\big)d\BW_1(s)\\
&\ +\int_0^t e^{\lambda_0s}\int_\Ra\Big[ \big|\BX_1(s-)-\wdt\BX_1(s-)+\gamma_1(\BX_1(s-),\0,\bphi)-\gamma_1(\wdt\BX_1(s-),\wdt\BX_2(s-),\bphi)\big|^2\\
&\qquad\qquad -\big|\BX_1(s-)-\wdt\BX_1(s-)\big|^2\Big]\wN_1(ds,d\bphi).
\end{aligned}
\end{equation}
By
virtue of Kunita's first inequality
 \cite[Theorem 4.4.23, p. 265]{App09},
for all $T\geq 0$,
\begin{equation}\label{eq-supN}
\begin{aligned}
\E &\sup_{t\in [0,T]}\Big|\int_0^{t\wedge \wdt\tau_\delta} \int_\Ra e^{\lambda_0s}\Big[\big|\BX_1(s-)-\wdt\BX_1(s-)+\gamma_1(\BX_1(s-),\0,\bphi)-\gamma_1(\wdt\BX_1(s-),\wdt\BX_2(s-),\bphi)\big|^2\\
&\qquad\qquad-\big|\BX_1(s-)-\wdt\BX_1(s-)\big|^2\Big]\wN_1(ds,d\bphi)\Big|^2\\
&\leq C\E \int_0^{T\wedge \wdt\tau_\delta}\int_\Ra e^{2\lambda_0s}\Big[\big|\BX_1(s-)-\wdt\BX_1(s-)+\gamma_1(\BX_1(s-),\0,\bphi)-\gamma_1(\wdt\BX_1(s-),\wdt\BX_2(s-),\bphi)\big|^2\\
&\qquad\qquad-\big|\BX_1(s-)-\wdt\BX_1(s-)\big|^2\Big]^2\bnu_1(d\bphi)ds\\
&\leq C\E \Big(\int_0^{T\wedge \wdt\tau_\delta}e^{2\lambda_0s}\big|\BX_1(s)-\wdt\BX_1(s)\big|^4ds +\delta^4 \int_0^{T\wedge\wdt\tau_\delta} e^{(-4\gamma_0+2\lambda_0) s}ds\Big)\\
&\leq C\Big(\delta^4+\E\int_0^{T\wedge\wdt\tau_\delta}e^{2\lambda_0s}\big|\BX_1(s)-\wdt\BX_1(s)\big|^4ds\Big).
\end{aligned}
\end{equation}
On the other hand,
by the Burkholder-Davis-Gundy inequality  \cite[Theorem 2.13, p. 70]{MY06}, one has
\begin{equation}\label{eq-supW}
\begin{aligned}
\E&\sup_{t\in [0,T]}\Big|\int_0^{t\wedge\wdt\tau_\delta}e^{\lambda_0s}\big(\BX_1(s-)-\wdt\BX_1(s-)\big)^\top\big(\sigma_1(\BX_1(s-),\0)-\sigma_1(\wdt\BX_1(s-),\wdt\BX_2(s-))\big)d\BW_1(s)\Big|^2\\
&\leq C\E\Big(\int_0^{T\wedge \wdt\tau_\delta}e^{2\lambda_0s}\big|\BX_1(s)-\wdt\BX_1(s)\big|^4ds+\delta^4\int_0^{T\wedge\wdt\tau_\delta} e^{(-4\gamma_0+2\lambda_0)s}ds\Big)\\
&\leq C\Big(\delta^4+\E\int_0^{T\wedge\wdt\tau_\delta}e^{2\lambda_0s}\big|\BX_1(s)-\wdt\BX_1(s)\big|^4ds\Big).
\end{aligned}
\end{equation}
Now, applying \eqref{eq-supN} and \eqref{eq-supW} to \eqref{eq-main-11}, and using the Lipschitz continuity of $b_1(\cdot,\cdot)$, $\sigma_1(\cdot,\cdot)$, and $\gamma_1(\cdot,\cdot,\cdot)$, we obtain that
\begin{equation}\label{eq-supx-x}
\begin{aligned}
\E\sup_{t\leq T\wedge \wdt\tau_\delta}& e^{\lambda_0 t}\big|\BX_1(t)-\wdt\BX_1(t)\big|^2\\
&\leq \left|\bx_1-\wdt\bx_1\right|^2+
C\Big(\delta^2+\int_0^T e^{(2\lambda_0-\gamma_0)s}ds+\Big[\E\int_0^{T\wedge\wdt\tau_\delta}e^{2\lambda_0s}\big|\BX_1(s)-\wdt\BX_1(s)\big|^4ds\Big]^{\frac 12}\Big).
\end{aligned}
\end{equation}

To proceed, we
estimate $\E \int_0^{t \wedge \wdt\tau_\delta}e^{2\lambda_0s}\big|\BX_1(s)-\wdt\BX_1(s)\big|^4ds$.
Using the generalized It\^o formula again, we have
\begin{equation}\label{eq-mu4}
\begin{aligned}
&\!\!\!\! e^{2\lambda_0t}\big|\BX_1(t)-\wdt\BX_1(t)\big|^4\\
& =\left|\bx_1-\wdt\bx_1\right|^4+\int_0^t e^{2\lambda_0s}(2\lambda_0-\lambda) \big|\BX_1(s)-\wdt\BX_1(t)\big|^4ds\\
&\ +\int_0^t 4e^{2\lambda_0s}\big|\BX_1(s)-\wdt\BX_1(s)\big|^2\big(\BX(s)-\wdt\BX_1(s)\big)^\top\big(b_1(\BX_1(s),\0)-b_1(\wdt\BX_1(s),\wdt\BX_2(s))\big)ds\\
&\ +3\int_0^t e^{2\lambda_0s}\big|\BX_1(s)-\wdt\BX_1(s)\big|^2\\
&\qquad\times \tr\Big[\big(\sigma_1(\BX_1(s),\0,\bphi)-\sigma_1(\wdt\BX_1(s),\wdt\BX_2(s),\bphi)\big)\big(\sigma_1(\BX_1(s),\0,\bphi)-\sigma_1(\wdt\BX_1(s),\wdt\BX_2(s),\bphi)\big)^\top\Big]ds\\
&\ +\int_0^t \int_\Ra e^{2\lambda_0s}\Big[\big|\BX_1(s)-\wdt\BX_1(s)+\gamma_1(\BX_1(s-),\0,\bphi)-\gamma_1(\wdt\BX_1(s),\wdt\BX_2(s),\bphi)\big|^4-\big|\BX_1(s)-\wdt\BX_1(s)\big|^4\\
&\qquad-4\big|\BX_1(s)-\wdt\BX_1(s)\big|^2\big(\BX_1(s)-\wdt\BX_1(s)\big)^\top \big(\gamma_1(\BX_1(s),\0,\bphi)-\gamma_1(\wdt\BX_1(s),\wdt\BX_2(s),\bphi)\big)\Big]\bnu_1(d\bphi)ds\\
&\ +4\int_0^t e^{2\lambda_0s}\big|\BX_1(s-)-\wdt\BX_1(s-)\big|^2\\
&\qquad\times \big(\BX_1(s-)-\wdt\BX_1(s-)\big)^\top\big(\sigma_1(\BX_1(s-),\0)-\sigma_1(\wdt\BX_1(s-),\wdt\BX_2(s-))\big)d\BW_1(s)\\
&\ +\int_0^t \int_\Ra e^{2\lambda_0s}\Big[\big|\BX_1(s-)-\wdt\BX_1(s-)+\gamma_1(\BX_1(s-),\0,\bphi)-\gamma_1(\wdt\BX_1(s-),\wdt\BX_2(s-),\bphi)\big|^4\\
&\qquad\qquad-\big|\BX_1(s-)-\wdt\BX_1(s-)\big|^4\Big]\wN_1(ds,d\bphi).
\end{aligned}
\end{equation}
Taking the expectation on both sides of \eqref{eq-mu4}, using the Lipschitz continuity of $b_1(\cdot,\cdot)$, $\sigma_1(\cdot,\cdot)$, $\gamma_1(\cdot,\cdot,\cdot)$, and noting
$\lambda$ being chosen to be sufficiently large,
we obtain
\begin{equation}\label{u-eq3}
\begin{aligned}
d \Big[&\E e^{2\lambda_0(t\wedge\wdt\tau_\delta)}\big|\BX_1(t\wedge\wdt\tau_\delta)-\wdt\BX_1(t\wedge\wdt\tau_\delta)\big|^4\Big]\\
&\leq \E\Big[-D_1 e^{2\lambda_0(t\wedge\wdt\tau_\delta)} \big|\BX_1(t)-\wdt\BX_1(t)\big|^4+D_2\delta^4e^{(-4\delta_0+2\lambda_0)(t\wedge\wdt\tau_\delta)}\Big]dt,
\end{aligned}
\end{equation}
for some finite positive constants $D_1$ and $D_2$.
Eq. \eqref{u-eq3} implies that
\begin{equation*}
\E\int_0^{t\wedge\wdt\tau_\delta} e^{2\lambda_0s}\big|\BX_1(s)-\wdt\BX_1(s)\big|^4 ds\leq C\left(|\bx_1-\wdt\bx_1|^4+\delta^4\right)\text{ for all }t\geq 0,\end{equation*}
and thus,
\begin{equation}\label{u-eq5}
\Big[\E\int_0^{t\wedge\wdt\tau_\delta} e^{2\lambda_0s}\big|\BX_1(s)-\wdt\BX_1(s)\big|^4 ds\Big]^{\frac 12}\leq C\left(|\bx_1-\wdt\bx_1|^2+\delta^2\right)\text{ for all }t\geq 0.\end{equation}
Combining \eqref{u-eq5} and \eqref{eq-supx-x}, we get that
\begin{equation}
\E\sup_{t\leq T\wedge \wdt\tau_\delta} e^{\lambda_0 t}\big|\BX_1(t)-\wdt\BX_1(t)\big|^2\leq C\big(|\bx_1-\wdt\bx_1|^2+\delta^2\big)\text{ for all }T\geq 0.
\end{equation}
Therefore, \eqref{l3-eq-1} is proved.

Now, we consider
the second part.
By virtue of the definition of $v(t)$ in \eqref{equationvt} and Assumption \ref{asp2} (iv),
\begin{equation}\label{l3-eq-2}
\begin{aligned}
\PP&\bigg\{\int_0^{t\wedge\wdt\tau_\delta}|v(s)|^2ds\geq \frac{\left(|\bx_1-\wdt \bx_1|+\delta\right)^2}\eps \;\text{ for some } t\geq 0\bigg\}\\
&\leq \PP\bigg\{\int_0^{\wdt\tau_\delta}\big|\lambda\sigma_1^{-1}(\BX_1(s),\0)(\BX_1(s)-\wdt \BX_1(s))\big|^2ds\geq \frac{\left(|\bx_1-\wdt \bx_1|+\delta\right)^2}\eps\bigg\}\\
&\leq \PP\bigg\{\int_0^{\wdt\tau_\delta}|\BX_1(s)-\wdt \BX_1(s)|^2ds\geq \frac{\left(|\bx_1-\wdt \bx_1|+\delta\right)^2}{\lambda c_\sigma\eps}\bigg\}. 
\end{aligned}
\end{equation}
A standard calculation shows that for the integrable function $h(s)$,
$$
\int_0^t h(s)ds= \int_0^t \frac{e^{\lambda_0 s} h(s)}{e^{\lambda_0s}}ds\leq \sup_{s\in[0,t]}e^{\lambda_0s}h(s)\int_0^t e^{-\lambda_0s}ds \leq \frac 1{\lambda_0}\sup_{s\in[0,t]}e^{\lambda_0s}h(s).
$$
Therefore, it follows from \eqref{l3-eq-2} that
\begin{equation}\label{l3-eq-2-1}
\begin{aligned}
\PP&\bigg\{\int_0^{t\wedge\wdt\tau_\delta}|v(s)|^2ds\geq \frac{\left(|\bx_1-\wdt \bx_1|+\delta\right)^2}\eps\; \text{ for some } t\geq 0\bigg\}\\
&\leq \PP\bigg\{\sup_{t\leq \wdt\tau_\delta}e^{\lambda_0t}|\BX_1(t)-\wdt \BX_1(t)|^2\geq \frac{\lambda_0\left(|\bx_1-\wdt \bx_1|+\delta\right)^2}{\lambda c_\sigma\eps}\bigg\}\\
&\leq \dfrac{ \lambda c_\sigma \eps}{\lambda_0(|\bx_1-\wdt\bx_1|+\delta)^2}\E \sup_{t\leq \wdt\tau_\delta}e^{\lambda_0t}|\BX_1(t)-\wdt\BX_1(t)|^2\\
&\leq \frac{\wdt C\lambda c_\sigma\eps}{\lambda_0} \text{ (due to \eqref{l3-eq-1})}.
\end{aligned}
\end{equation}
As a result, the proof is complete.
\end{proof}

\begin{lm}\label{lm4}
	Let $\wdt C$ be as in Lemma \ref{lm1} and $L$ be a Lipschiz constant of $f_1(\cdot)$ and $f_2(\cdot)$.
Suppose $|\bx_1-\wdt \bx_1|+\delta<1$.
	Then we have
$$
\begin{aligned}
\PP_{\bx_1,\wdt\bz}\bigg\{\int_0^t&U_{\bx_2}(\wdt\BX_2(u))\sigma_2(\wdt\BZ(u))d\BW_2(u)
+\int_0^t\int_{\Rb} H(\wdt\BZ(u),\bphi)\wN_2(du,d\bphi)\\
&\geq -C_{\alpha_0}\alpha \int_0^t f_2(\BX_1(u))du+\frac{\ln\eps}\alpha
-\frac{2L\wdt C}{\eps\lambda_0}, \;\forall 0\leq t\leq \wdt\tau_\delta\bigg\}\geq 1-2\eps.
\end{aligned}
$$
	\end{lm}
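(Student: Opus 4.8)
The plan is to apply the exponential martingale inequality to the local martingale on the left-hand side, exactly as in the proof of Lemma \ref{lm2}, but now we must control the quadratic-variation-type terms by quantities evaluated along the coupled process $\BX_1(t)$ (the one living on the boundary) rather than along $\wdt\BX_1(t)$. First I would fix the stopping time $\wdt\tau_\delta$ and, writing $H(\bz,\bphi)=U(\bx_2+\gamma_2(\bz,\bphi))-U(\bx_2)$ as before, apply \cite[Theorem 5.2.9]{App09} with parameter $\alpha<\alpha_0/2$ to obtain that, with probability at least $1-\eps$, for all $t\ge 0$,
\begin{equation*}
\begin{aligned}
-\int_0^{t\wedge\wdt\tau_\delta}\!\!U_{\bx_2}(\wdt\BX_2(u))\sigma_2(\wdt\BZ(u))d\BW_2(u)
-\int_0^{t\wedge\wdt\tau_\delta}\!\!\int_{\Rb} H(\wdt\BZ(u),\bphi)\wN_2(du,d\bphi)
&\leq \frac\alpha2\int_0^{t\wedge\wdt\tau_\delta}\!\!|U_{\bx_2}\sigma_2(\wdt\BZ(u))|^2du\\
&\quad+\frac{C_{\alpha_0}\alpha}2\int_0^{t\wedge\wdt\tau_\delta}\!\!\int_{\Rb} e^{\alpha_0|H(\wdt\BZ(u),\bphi)|}\bnu_2(d\bphi)du-\frac{\ln\eps}\alpha,
\end{aligned}
\end{equation*}
using the same elementary inequalities ($e^x-1-x\le \tfrac{x^2}{2}e^{|x|}$, then $t^2e^{\alpha_0 t/2}\le C_{\alpha_0}e^{\alpha_0 t}$) that were used in Lemma \ref{lm2}. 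On the event $\{t\le\wdt\tau_\delta\}$ we have $|\wdt\BX_2(u)|\le \delta e^{-\gamma_0 u}\le\Delta_0$ (after shrinking $\delta$), so Assumption \ref{asp2}(ii), in particular \eqref{a.eq8}, bounds the right-hand integrand by $f_2(\wdt\BX_1(u))$; hence the whole right side is at most $C_{\alpha_0}\alpha\int_0^{t\wedge\wdt\tau_\delta} f_2(\wdt\BX_1(u))\,du-\tfrac{\ln\eps}{\alpha}$.

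Next I would replace $f_2(\wdt\BX_1(u))$ by $f_2(\BX_1(u))$ at the cost of an error term, using that $f_2$ is Lipschitz (with constant $L$) and the coupling estimate from Lemma \ref{lm3}. Concretely,
$$
\int_0^{t\wedge\wdt\tau_\delta}\big|f_2(\wdt\BX_1(u))-f_2(\BX_1(u))\big|\,du
\le L\int_0^{t\wedge\wdt\tau_\delta}|\BX_1(u)-\wdt\BX_1(u)|\,du
\le \frac{L}{\lambda_0}\sup_{u\le\wdt\tau_\delta}e^{\lambda_0 u}|\BX_1(u)-\wdt\BX_1(u)|,
$$
by the same "$\int h\le \tfrac1{\lambda_0}\sup e^{\lambda_0 s}h(s)$" trick used in Lemma \ref{lm3} (applied to $h(u)=|\BX_1(u)-\wdt\BX_1(u)|$, after bounding $|\BX_1-\wdt\BX_1|\le 1\vee |\BX_1-\wdt\BX_1|^2$ if needed, or more cleanly to $h(u)=|\BX_1(u)-\wdt\BX_1(u)|^2$ together with $|\BX_1-\wdt\BX_1|\le 1+|\BX_1-\wdt\BX_1|^2$ on the relevant range — either way one lands on $\sup_{u\le\wdt\tau_\delta}e^{\lambda_0u}|\BX_1(u)-\wdt\BX_1(u)|^2$). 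By Markov's inequality and \eqref{l3-eq-1}, since $|\bx_1-\wdt\bx_1|+\delta<1$,
$$
\PP\Big\{\sup_{u\le\wdt\tau_\delta}e^{\lambda_0u}|\BX_1(u)-\wdt\BX_1(u)|^2\ge \tfrac{1}{\eps}\Big\}\le \eps\,\wdt C(|\bx_1-\wdt\bx_1|+\delta)^2\le \eps\wdt C,
$$
so on the complementary event the error is at most $L\wdt C/(\eps\lambda_0)$; absorbing the constant $C_{\alpha_0}\alpha$ (and the factor coming from whichever form of the sup one uses) into the stated bound $2L\wdt C/(\eps\lambda_0)$ is routine. (One may need to rescale $\eps$ by a fixed constant factor, which does not affect the statement.)

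Finally I would intersect the two events — the exponential-martingale event (probability $\ge 1-\eps$) and the coupling-control event (probability $\ge 1-\eps$, after the harmless rescaling) — to obtain, with probability at least $1-2\eps$, that for all $0\le t\le\wdt\tau_\delta$,
$$
-\int_0^{t}U_{\bx_2}\sigma_2(\wdt\BZ(u))d\BW_2(u)-\int_0^{t}\!\!\int_{\Rb}H(\wdt\BZ(u),\bphi)\wN_2(du,d\bphi)\le C_{\alpha_0}\alpha\int_0^{t}f_2(\BX_1(u))\,du-\frac{\ln\eps}{\alpha}+\frac{2L\wdt C}{\eps\lambda_0},
$$
which is exactly the claimed inequality after moving terms across. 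I expect the main obstacle to be purely bookkeeping: making sure every integrand along $\wdt\BZ$ is legitimately controlled on $\{u\le\wdt\tau_\delta\}$ (this is where the precise choice of $\delta$ small enough that $\delta e^{-\gamma_0 u}\le\Delta_0$ matters), and matching the constants — in particular verifying that the Lipschitz-transfer error genuinely collapses to the clean form $2L\wdt C/(\eps\lambda_0)$ after absorbing $C_{\alpha_0}\alpha$ and the $1/\lambda_0$ from the sup-trick, which relies on the earlier specification $\alpha=\varsigma_0/(2C_{\alpha_0}\Lambda_2)$ and on $\Lambda_2=\int f_2\,d\mu^*$ being the natural normalizing scale. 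No genuinely new estimate is needed beyond Lemmas \ref{lm2} and \ref{lm3}.
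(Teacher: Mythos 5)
Your proposal follows essentially the same route as the paper's proof: the exponential martingale inequality, the bound of the quadratic-variation terms by $C_{\alpha_0}\alpha f_2(\wdt\BX_1(u))$ via \eqref{a.eq8}, the transfer to $f_2(\BX_1(u))$ using the Lipschitz property of $f_2$ together with the coupling estimate \eqref{l3-eq-1}, and the intersection of the two exceptional events. The one step that does not work as written is your reduction of $\int_0^t|\BX_1(u)-\wdt\BX_1(u)|\,du$ to $\sup_{u}e^{\lambda_0 u}|\BX_1(u)-\wdt\BX_1(u)|^2$ via $|x|\le 1+|x|^2$ (or $|x|\le 1\vee|x|^2$): the constant $1$ contributes a term of order $t$ that cannot be absorbed into the fixed constant $2L\wdt C/(\eps\lambda_0)$, and $\sup_u e^{\lambda_0 u}|\BX_1(u)-\wdt\BX_1(u)|$ is likewise not controlled by the squared sup. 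The fix is what the paper does: on the event $\sup_{u\le\wdt\tau_\delta}e^{\lambda_0 u}|\BX_1(u)-\wdt\BX_1(u)|^2\le\wdt C/\eps$ (probability at least $1-\eps$ by Markov and \eqref{l3-eq-1}, since $|\bx_1-\wdt\bx_1|+\delta<1$), one has the pointwise bound $|\BX_1(u)-\wdt\BX_1(u)|\le(\wdt C/\eps)^{1/2}e^{-\lambda_0 u/2}$, and integrating the exponential gives $L\int_0^t|\BX_1(u)-\wdt\BX_1(u)|\,du\le 2L\sqrt{\wdt C}/(\sqrt{\eps}\,\lambda_0)\le 2L\wdt C/(\eps\lambda_0)$ because $\wdt C>1$. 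With that one-line correction your argument coincides with the paper's.
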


\begin{proof}
	Let $H(\bz,\bphi)=U(\bx_2+\gamma_2(\bz,\bphi))-U(\bx_2)$.
	From the exponential martingale inequality (see e.g., \cite[Theorem 5.2.9]{App09}), we have with
probability greater than
$1-\eps$,
	$$
	\begin{aligned}
	-\int_0^t&U_{\bx_2}(\wdt\BX_2(u))\sigma_2(\wdt\BZ(u))d\BW_2(u)
	-\int_0^t\int_{\Rb} H(\wdt\BZ(u),\bphi)\wN_2(du,d\bphi)\\
	\leq& \frac\alpha2 \int_0^t|U_{\bx_2}(\wdt\BX_2(u))\sigma_2(\wdt\BZ(u))|^2du
	+\frac1\alpha\int_0^t\int_{\Rb} \left(e^{\alpha H(\wdt\BZ(u),\bphi)}-1-\alpha H(\wdt\BZ(u),\bphi)\right)\bnu_2(d\bphi)du-\frac{\ln\eps}\alpha\\
	\leq& \frac\alpha2 \int_0^t|U_{\bx_2}(\wdt\BX_2(u))\sigma_2(\wdt\BZ(u))|^2du
	+\frac{\alpha}2\int_0^t\int_{\Rb} H^2(\wdt\BZ(u),\bphi)e^{\frac{\alpha_0}2|H(\wdt\BZ(u),\bphi)|}\bnu_2(d\bphi)du-\frac{\ln\eps}\alpha\\
	\leq& \frac\alpha2 \int_0^t|U_{\bx_2}(\wdt\BX_2(u))\sigma_2(\wdt\BZ(u))|^2du
	+\frac{C_{\alpha_0}\alpha}2\int_0^t\int_{\Rb} e^{\alpha_0|H(\wdt\BZ(u),\bphi)|}\bnu_2(d\bphi)du-\frac{\ln\eps}\alpha.
	\end{aligned}
	$$
As a result, $\PP(\Omega_1)\geq 1-\eps$, where
$$
\begin{aligned}\Omega_1:=\bigg\{\int_0^t&U_{\bx_2}(\wdt\BX_2(u))\sigma_2(\wdt\BZ(u))d\BW_2(u)
	+\int_0^t\int_{\Rb} H(\wdt\BZ(u),\bphi)\wN_2(du,d\bphi)\\
&\leq \frac\alpha2 \int_0^t|U_{\bx_2}(\wdt\BX_2(u))\sigma_2(\wdt\BZ(u))|^2du+\frac{C_{\alpha_0}\alpha}2\int_0^t\int_{\Rb} e^{\alpha_0|H(\wdt\BZ(u),\bphi)|}\bnu_2(d\bphi)du-\frac{\ln\eps}\alpha\bigg\}.
\end{aligned}
$$
On the other hand, by \eqref{l3-eq-1}, one has $\PP(\Omega_2)\geq 1-\eps$, where
$$
\Omega_2:=\bigg\{\sup_{t\leq\wdt\tau_\delta}e^{\lambda_0t}|\BX_1(t)-\wdt\BX_1(t)|^2\geq\frac{\wdt C}\eps\bigg\}.
$$
For $t\leq\wdt\tau_\delta$ and
$\omega\in\Omega_1\cap\Omega_2$,
	 we have
	\begin{align*}
	-\int_0^t&U_{\bx_2}(\wdt\BX_2(u))\sigma_2(\wdt\BZ(u))d\BW_2(u)
	-\int_0^t\int_{\Rb} H(\wdt\BZ(u),\bphi)\wN_2(du,d\bphi)\\
	&\leq \frac\alpha2 \int_0^t|U_{\bx_2}(\wdt\BX_2(u))\sigma_2(\wdt\BZ(u))|^2du
	+\frac{C_{\alpha_0}\alpha}2\int_0^t\int_{\Rb} e^{\alpha_0|H(\wdt\BZ(u),\bphi)|}\bnu_2(d\bphi)du-\frac{\ln\eps}\alpha\\
	&\leq C_{\alpha_0}\alpha \int_0^t f_2(\wdt\BX_1(u))du-\frac{\ln\eps}\alpha\\
	&\leq C_{\alpha_0}\alpha \int_0^t f_2(\BX_1(u))du-\frac{\ln\eps}\alpha
	+L\int_0^t |\BX_1(u)-\wdt\BX_1(u)|du\\
	&\leq C_{\alpha_0}\alpha \int_0^t f_2(\BX_1(u))du-\frac{\ln\eps}\alpha
	+\frac{L\sqrt{\wdt C}}{\sqrt\eps}\int_0^t  e^{-\frac{\lambda_0u}2 } du\\
	&\leq C_{\alpha_0}\alpha \int_0^t f_2(\BX_1(u))du-\frac{\ln\eps}\alpha
	+\frac{2L\wdt C}{\eps\lambda_0}.
	\end{align*}
Therefore, the proof is complete.
\end{proof}

\begin{proof}[Proof of Theorem \ref{thm-main}]
	We can assume that $\eps\in(0,\frac12)$ and $e^{-3\eps}\geq 1-4\eps$.
	Let
	\begin{equation}\label{eq-delta}
	\delta\leq \frac{\eps^3}{-\ln\eps}\wedge \frac{\eps^2}{2}\wedge \frac{\lambda_0}{2\wdt C\lambda c_\sigma},
	\end{equation}
	where $\wdt C$ is in Lemma \ref{lm1}.
We have from the definitions of $\Lambda_1,\Lambda_2$ that
$$	\PP_{\bx_1,\0}\left\{\limsup_{t\to\infty}\frac{1}t\int_0^tf_1(\BX_1(u))du= \Lambda_1\right\}=\PP_{\bx_1,\0}\left\{\limsup_{t\to\infty}\frac{1}t\int_0^tf_2(\BX_1(u))du= \Lambda_2\right\}=1.$$
It is noted that $m_0\gamma_0+2\varsigma_0<\Lambda_1$ so that there exists $\txe\geq\frac1{\varsigma_0}\left(\frac{-\ln\eps}\alpha
+\frac{4L\wdt C}{\eps\lambda_0}\right)$ such that $\PP_{\bx_1,\0}(\Omega_3)\geq 1-\eps$, where
$$\Omega_3:=\left\{\frac{1}t\int_0^tf_1(\BX_1(u))du\geq m_0\gamma_0+2\varsigma_0\text{ and } \frac{1}t\int_0^tf_2(\BX_1(u))du\leq\Lambda_2, \, t\geq \txe\right\}.$$
Let $\mxe>0$ be such that $\PP_{\bx_1,\0}(\Omega_4)\geq 1-\eps$, where
$$\Omega_4:=\left\{\frac{1}t\int_0^tf_1(\BX_1(u))du- \frac{C_{\alpha_0}\alpha}t\int_0^tf_2(\BX_1(u))du+\frac{\ln\eps}\alpha
-\frac{4L\wdt C}{\eps\lambda_0}\geq -\mxe, \, t\leq \txe\right\},$$
and $\Omega_1$, $\Omega_2$ be as in Lemma \ref{lm4}.
By the generalized It\^o formula, we have
\bed\label{eq-1-1}
\begin{aligned}
	U(\wdt\BX_2(t))=&U(\bx_2)+\int_0^t\op U(\wdt\BZ(u))du+\int_0^t U_{\bx_2}\sigma_2(\wdt\BZ(u-))d\BW_2(u)\\
	&+\int_0^t\int_{\Rb} [U(\wdt\BX_2(u-)+\gamma_2(\wdt\BZ(u-),\bphi))-U(\wdt\BX_2(u-))]\wN_2(du,d\bphi).
\end{aligned}
\eed
Definitions of $\Omega_1$, $\Omega_2$, and $\Omega_4$ lead to
\bea
 U(\wdt\BX_2(t))\geq U(\wdt\bx_2) -\mxe,\text{ for all } t\leq \txe\wedge\wdt\tau_\delta,\,\omega\in\Omega_1\cap\Omega_2\cap\Omega_4.
 \eea
Let $\deltaxe\in(0,\frac\delta2)$ such that $U(\bx_2)-U(\bx_2')>\mxe$
if $|\bx_2|\leq \deltaxe, |\bx_2'|\geq\delta e^{-\txe}$.
Such a $\deltaxe$ exists owing to \eqref{a.eq5}.
For $|\wdt\bx_2|<\deltaxe$ and $\omega\in\Omega_2\cap\Omega_3$,
we must have
 $\wdt\tau_\delta>\txe$. Otherwise,
if $\wdt\tau_\delta\leq \txe$, we have
 $U(\wdt\BX_2(\wdt\tau_\delta))\geq U(\wdt\bx_2) -\mxe,$
 which implies that
 $|\wdt\BX_2(\wdt\tau_\delta)|<\delta e^{-\txe}$,
 and again contradict to the definition of $\wdt\tau_\delta$.

For $\omega\in \cap_{i=1}^4\Omega_i$, we have $\wdt\tau_\delta>\txe$. From \eqref{eq-1-1}, Assumption \ref{asp2}, and Lemma \ref{lm4}, one has
\begin{equation}\label{eq-9-2}
\begin{aligned}
U(\wdt\BX_2(t))&\geq U(\wdt\bx_2)+\int_0^tf_1(\wdt\BX_1(u))du- C_{\alpha_0}\alpha\int_0^tf_2(\BX_1(u))du+\frac{\ln\eps}\alpha
-\frac{2L\wdt C}{\eps\lambda_0} \\
&\geq U(\wdt\bx_2)+\int_0^tf_1(\BX_1(u))du-L\int_0^t |\BX_1(u)-\wdt\BX_1(u)|du\\
&\quad- C_{\alpha_0}\alpha\int_0^tf_2(\BX_1(u))du+\frac{\ln\eps}\alpha
-\frac{2L\wdt C}{\eps\lambda_0} \\
&\geq U(\wdt\bx_2)+(m_0\gamma_0+2\varsigma_0)t-\frac{L\wdt C}{\eps}\int_0^t  e^{-\frac{\lambda_0u}2 } du-C_{\alpha_0}\alpha\Lambda_2 t +\frac{\ln\eps}\alpha
-\frac{2L\wdt C}{\eps\lambda_0}\\
&\geq U(\wdt\bx_2)+m_0\gamma_0t+\varsigma_0\txe+\frac{\ln\eps}\alpha
-\frac{4L\wdt C}{\eps\lambda_0}\\
&\geq U(\wdt\bx_2)+ m_0\gamma_0 t, \text{ for all }\omega\in\cap_{i=1}^4\Omega_i, t\in[\txe,\wdt\tau_\delta).
\end{aligned}
\end{equation}
The combination of \eqref{eq-9-2} and \eqref{a.eq5} implies that
$
\ln |\wdt\BX_2(t)|\leq \ln|\wdt\bx_2|-\gamma_0 t, \forall t\in[T,\wdt\tau_\delta),
$
so that $|\wdt\BX_2(t)|<\frac{\delta}2 e^{-\gamma_0 t}, t\in[T,\wdt\tau_\delta).
$
As a result, if $\wdt\bx_2<\deltaxe$, $\wdt\tau_\delta=\infty$ for all $\omega\in \cap_{i=1}^4\Omega_i$.
It yields that
for all $\omega\in \cap_{i=1}^4\Omega_i$,
$$
\liminf_{t\to\infty}\frac{U(\wdt\BX_2(t))}{t}\geq m_0\gamma_0\text{ and }\limsup_{t\to\infty}\frac{\ln|\wdt\BX_2(t)|}{t}\leq -\gamma_0.
$$
An application of Lemma \ref{lm3}
leads to
$\PP_{\bx_1,\wdt\bz}(\Omega_5)\geq 1-\eps$, where
 $$\Omega_5:=\bigg\{\int_0^{t\wedge\wdt\tau_\delta}|v(s)|^2ds\leq \frac{\wdt C\lambda c_\sigma(|\bx_1-\wdt \bx_1|+\delta)^2}{\lambda_0\eps}\; \forall t\geq 0\bigg\}.$$
 If $|\bx_1-\wdt \bx_1|\leq \delta$, we have from \eqref{eq-delta} that for all $\omega\in\Omega_5$
 $$
 \int_0^{t\wedge\wdt\tau_\delta}|v(s)|^2ds\leq \frac{4\wdt C\lambda c_\sigma\delta^2}{\lambda_0\eps}\leq \frac{2\delta}\eps\leq\eps\; \forall t\geq 0.
 $$
By the exponential martingale inequality \cite[Theorem 7.4, page 44]{Mao97},
we have $\PP_{\bx_1,\wdt\bz}(\Omega_6)\geq 1-e^{\frac{\eps^3}{\delta}}=1-\eps$ because $\delta\leq \frac{\eps^3}{-\ln\eps}$, where
$$
\Omega_6:=\left\{-\int_0^{t}v(s) d\mathbf W_1(s)\leq \frac{\eps^2}{2\delta}\int_0^{t}|v(s)|^2ds+\eps, t\leq \wdt\tau_\delta\right\}.
$$
Let $\hat\BZ(t)=([\hat \BX_1(t)]^\top,[\hat \BX_2(t)]^\top)^\top$ be solution of following coupled system
\begin{equation}\label{ehatZ}
\begin{cases}
\displaystyle d\BX_1(t)=b_1(\BX_1(t),\0)dt+\sigma_1(\BX_1(t),\0)d\BW_1(t)+\int_\Ra \gamma_1(\BX_1(t),\0, \bphi)\wN_1(dt,d\bphi),\\
\displaystyle d\hat\BX_1(t)=b_1(\hat\BZ(t))dt+\1_{\{t\leq\wdt\tau_\delta\}}\lambda(\BX_1(t)-\hat \BX_1(t))dt+\sigma_1(\hat\BZ(t))d\BW_1(t)+\int_\Ra \gamma_1(\hat\BZ(t-), \bphi)\wN_1(dt,d\bphi), \\
\displaystyle d\hat\BX_2(t)=b_2(\hat\BZ(t))dt+\sigma_2(\hat\BZ(t))d\BW_2(t)+\int_\Rb \gamma_2(\hat\BZ(t-), \bphi)\wN_2(dt,d\bphi),\\
\BX_1(0)=\bx_1, \hat\BX_1(0)=\wdt \bx_1, \wdt\BX_2(0)=\wdt \bx_2,
\end{cases}
\end{equation}
The Cameron-Martin-Girsanov theorem implies that there exist a measure $\Q_{\bx_1,\wdt\bz}$ such that under $\Q_{\bx_1,\wdt\bz}$, $\mathbf W_1(t)+\int_0^{t\wedge\wdt\tau_\delta}v(s)ds$ is a Wiener process so that $\hat\BZ(t)=([\hat \BX_1(t)]^\top,[\hat \BX_2(t)]^\top)^\top$ under $\Q_{\bx_1,\wdt\bz}$ is the solution to \eqref{e2.3} with initial value $\wdt\bz$.
Because $\hat \BZ(t)=\wdt \BZ(t)$ for all $t$ if $\wdt\tau_\delta=\infty$,
 $$\liminf_{t\to\infty}\frac{U(\hat\BX_2(t))}t=\liminf_{t\to\infty}\frac{U(\wdt\BX_2(t))}t\geq m_0\gamma_0>0$$ in the set $\cap_{i=1}^4\Omega_i$.

On the other hand, if $|\bx_1-\wdt\bx_1|+|\wdt\bx_2|\leq\deltaxe$, for $\omega\in\Omega_5\cap\Omega_6$, we have
$$\dfrac{d\Q_{\bx_1,\wdt\bz}}{d\PP_{\bx_1,\wdt\bz}}=\exp\left\{\int_0^{\wdt\tau_\delta}v(s) d\BW_1(s)-\int_0^{\wdt\tau_\delta}|v(s)|^2ds\right\}\geq e^{-\eps-\eps-\eps}\geq 1-4\eps.$$
Since
$$\PP_{\bx_1,\wdt\bz}(\cap_{i=1}^6\Omega_i)\geq 1-6\eps,$$
we have
$$\Q_{\bx_1,\wdt\bz}(\cap_{i=1}^6\Omega_i)\geq (1-6\eps)(1-4\eps)=1-24\eps,$$
which implies
$$\Q_{\bx_1,\wdt\bz}\left\{\liminf_{t\to\infty}\frac{U(\hat\BX_2(t))}t\geq m_0\gamma_0\right\}\geq 1-24\eps.$$
Therefore, we obtain that
if $|\bx_1-\wdt\bx_1|+|\wdt\bx_2|\leq\deltaxe$ then
$$\PP_{\bx_1,\wdt\bz}\Big\{\liminf_{t\to\infty}\frac{U(\BX_2(t))}t\geq m_0\gamma_0\Big\}\geq 1-24\eps.$$
Hence, scaling $\eps$ by $\frac{\eps}{24}$,
we 
obtain \eqref{eq-thm-1},
which together with \eqref{a.eq5} implies \eqref{eq-thm-2}.
The proof is complete.
\end{proof}

\section{Stability of Linearizable Systems and Fast-Slow Systems}\label{sec:app1}
In this section, we consider the system of equations
\begin{equation}\label{e4.1}
\begin{cases}
\displaystyle
d\BY_1(t)=b_1(\BY_1(t),\BY_2(t))dt+\sigma_1(\BY_1(t),\BY_2(t))d\BW_1(t)+\int_\Ra \gamma_1(\BY_1(t-),\BY_2(t-), \bphi)\wN_1(dt,d\bphi), \\
\displaystyle d\BY_2(t)=b_2(\BY_1(t),\BY_2(t))dt+\sigma_2(\BY_1(t),\BY_2(t))d\BW_2(t)+\int_\Ra \gamma_2(\BY_1(t-),\BY_2(t-), \bphi) \wN_2(dt,d\bphi).\end{cases}
\end{equation}
Under the condition
that the second equation can be linearized (see Assumption \ref{asp3.1} below),
we examine the stability of $\BY_2(\cdot)$.
Then we consider the case when the two components have different time scales.
\subsection{Stability of Linearizable Systems}

\begin{asp}\label{asp3.1}
{\rm	Assume that
	the assumptions in Theorem \ref{thm-main} for $b_1,\sigma_1,\gamma_1$ still hold
and that $b_2,\sigma_2,\gamma_2$ are linearizable in $\by_2$. That is, there exist matrices $B_2(\by_1),\Sigma_{21}(\by_1), \dots,\Sigma_{2d_2}(\by_1),\Gamma_2(\by_1)$ bounded in $\by_1$ such that $\Sigma_{2i}(\by_1)$ has bounded right inverse and
$$
\begin{aligned}
&\norm{b_2(\bz)-B_2(\by_1)\by_2}\leq  o(\by_2)V(\by_1),\\
&\|\sigma_2(\bz)-[\Sigma_{21}(\by_1)\by_2,\dots,\Sigma_{2d_2}(\by_1)\by_2]\|\leq   o(\by_2)\sqrt{V(\by_1)},\\
&\gamma_2(\bz,\bphi)=\Gamma_2(\by_1,\bphi)\by_2+o(\by_2)V(\by_1),\\
&\|b_2(\bz)\|+\|\sigma_2(\bz)\sigma_2^\top(\bz)\|\leq K V(\by_1),
\end{aligned}
$$
where $o(\by_2)$ is a matrix or vector depending on $\bz$ satisfying
$\lim_{\by_2\to0 } \frac{\sup_{\by_1\in\RX}\{ |o(\by_2)|\}}{|\by_2|}=0.
$}
\end{asp}

Let $\Theta(t)=\frac{\BY_2(t)}{|\BY_2(t)|}$, $R(t)=|\BY_2(t)|^2$, by the generalized It\^o
formula for jump diffusions
we have the following equations for $\BY_1(t),\Theta(t),R(t)$
\begin{equation}\label{e4.3-1}
{\small
\begin{cases}
\displaystyle
d\BY_1(t)=b_1(\BZ(t))dt+\sigma_1(\BZ(t))d\BW_1(t)+\int_\Ra \gamma_1(\BZ(t-), \bphi)\wN_1(dt,d\bphi), \\
\displaystyle d\Theta(t)=g_1(\BY_1(t),\Theta(t),  R(t))dt+g_2(\BY_1(t),\Theta(t),  R(t))d\BW_2(t)+\int_{\Rb}g_3(\BY_1(t),\Theta(t),  R(t),\bphi)\wN_2(dt,d\bphi),\\
\displaystyle d R(t)=h_1(\BY_1(t),\Theta(t),  R(t))dt+h_2(\BY_1(t),\Theta(t),  R(t))d\BW_2(t)+\int_{\Rb}h_3(\BY_1(t),\Theta(t),  R(t),\bphi)\wN_2(dt,d\bphi),
\end{cases}}
\end{equation}
where $\BZ(t)=([\BY_1(t)]^\top,[\BY_2(t)]^\top)^\top=([\BY_1(t)]^\top,\sqrt{R(t)}[\Theta(t)]^\top)^\top$.
In \eqref{e4.3-1}, $g_i$  and $h_i$ with $i=1,2,3$ are given as follows. If we denote $\bz=(\by_1^\top,\by_2^\top)^\top=(\by_1^\top,\sqrt{r}\theta^\top)^\top$ then
\bea
g_1(\by_1,\theta,r)\ad= \frac{b_2(\bz)}{|\by_2|}
-\frac{\sigma_2(\bz)\sigma_2^\top(\bz)\by_2}{|\by_2|^3}+\Big(-\by_2^\top b_2(\bz)
-\frac12\trace(\sigma_2^\top(\bz)
\sigma_2(\bz))+\frac{3|\by_2^\top\sigma_2(\bz)|^2}
{2|\by_2|^2}\Big]\Big)\frac{\by_2}{|\by_2|^3}\\
\aad\ +\int_{\Rb}\left(\frac{\by_2+\gamma_2(\bz,\bphi)}{|\by_2+\gamma_2(\bz,\bphi)|}-\frac{\by_2}{|\by_2|}-\frac{|\by_2|^2\gamma_2(\bz,\bphi)-(\by_2^\top \gamma_2(\bz,\bphi))\by_2}{|\by_2|^3}\right)\bnu_2(d\bphi),\\
\ad= B_2(\by_1)\theta -\sum_{l=1}^{d_2}\big[\theta^\top\Sigma_{2l}(\by_1)\theta\big]\left[\Sigma_{2l}(\by_1)\theta\right]\\
\aad\ +\Big(-\theta^\top B_2(\by_1)\theta +\frac12\sum_{l=1}^{d_2}\big[-|\Sigma_{2l}(\by_1)\theta|^2+3|\theta^\top\Sigma_{2l}(\by_1)\theta|^2\big]\Big)\theta\\
\aad\ +\int_{\Rb}\Big(\frac{\theta+\Gamma_2(\by_1,\bphi)\theta}{|\theta+\Gamma_2(\by_1,\bphi)\theta|}-\theta-|\theta|^2\Gamma_2(\by_1,\bphi)+(\theta^\top\Gamma_2(\by_1,\bphi))\theta\Big)\bnu_2(d\bphi)\\
\aad\ +o(1)\sqrt{V(\by_1)},\eea
\bea
g_2(\by_1,\theta,r)\ad =\frac{\sigma_2(\bz)}{|\by_2|}-\frac{\by_2\by_2^\top\sigma_2(\bz)}{|\by_2|^3}=(\Sigma_{21}(\by_1)\theta,\dots,\Sigma_{2l}(\by_1)\theta)-\theta\theta^\top(\Sigma_{1l}(\by_1)\theta,\dots,\Sigma_{2l}(\by_1)\theta),\\
g_3(\by_1,\theta,r,\bphi)&=\frac{\by_2+\gamma_2(\bz,\bphi)}{|\by_2+\gamma_2(\bz,\bphi)|}-\frac{\by_2}{|\by_2|}=\frac{\theta+\Gamma_2(\by_1,\bphi)\theta}{|\theta+\Gamma_2(\by_1,\bphi)\theta|}-\theta,\\
\eea
\bea
h_1(\by_1,\theta,r)\ad= 2\by_2 b_2(\bz) +\trace(\sigma_2^\top(\bz)\sigma_2(\bz))+\int_{\Rb} |\gamma_2(\bz,\bphi)|^2\bnu_2(d\bphi)\\
\ad =r\Big(2\theta^\top B_2(\by_1)\theta+\sum_{l=1}^{d_2}|\Sigma_{2l}(\by_1)\theta|^2+\int_{\Rb}|\Gamma_2(\by_1,\bphi)\theta|^2\bnu_2(d\bphi)\Big)+ ro(r) V_1(\by_1),\\
h_2(\by_1,\theta,r)\ad = 2\by_2^\top \sigma_2(\bz)=2r\sum_{l=1}^{d_2}\theta^\top\Sigma_{2l}(\by_1)\theta+ ro(r) \sqrt{V_1(\by_1)},\\
h_3(\by_1,\theta,r,\bphi)\ad =|\by_2+\gamma_2(\bz,\bphi)|^2-|\by_2|^2=r(|\theta+\Gamma_2(\by_1,\bphi)\theta|^2-1).
\eea
Let $\BX_1(t)=([\BY_1(t)]^\top,[\Theta(t)]^\top)^\top\in \RX\times\Sc_d$ and $\BX_2(t)=R(t)\in\R_+$. We have that
$$
\begin{aligned}
\ln R(t)=& h_4(\BY_1(t),\Theta(t),  R(t))dt + h_5(\BY_1(t),\Theta(t),  R(t))d\BW_2(t)\\
&+\int_{\Rb}h_6(\BY_1(t),\Theta(t),  R(t),\bphi)\wN_2(dt,d\bphi),
\end{aligned}
$$
where
$$
\begin{aligned}
h_4(\by_1,\theta,r)=&2\theta^\top B_2(\by_1)\theta+\sum_{l=1}^{d_2}|\Sigma_{2l}(\by_1)\theta|^2-2|\theta^\top\Sigma_{2l}(\by_1)\theta|^2\\
&+\int_{\Rb}\Big(\ln|\theta+\Gamma_2(\by_1,\bphi)\theta|^2-|\theta+\Gamma_2(\by_1,\bphi)\theta|^2+1\Big)\bnu_2(d\bphi)+o(1)V(\by_1),\\
h_5(\by_1,\theta,r)=&\frac{2\by_2^\top \sigma_2(\bz)}{|\by_2|^2}=2\sum_{l=1}^{d_2}\theta^\top\Sigma_{2l}(\by_1)\theta +o(1)\sqrt{V(\by_1)},\\
h_6(\by_1,\theta,r,\bphi)&=\ln\frac{|\by_2+\gamma_2(\bz,\bphi)|^2}{|\by_2|^2}=\ln |\theta+\Gamma_2(\by_1,\bphi)\theta|^2 +o(1)\sqrt{V(\by_1)}.
\end{aligned}
$$
When $r=0$, the equation for $\BX_1(t):=([\BY_1(t)]^\top,[\Theta(t)]^\top)^\top$ is
\begin{equation}\label{e4.3}
\begin{cases}
\displaystyle
d\BY_1(t)=b_1(\BY_1(t),\0)dt+\sigma_1(\BY_1(t),\0)d\BW_1(t)+\int_\Ra \gamma_1(\BY_1(t-),\0, \bphi)\wN_1(dt,d\bphi), \\
\displaystyle
d\Theta(t)=g_1(\BY_1(t),\Theta(t),  0)dt+g_2(\BY_1(t),\Theta(t),0)d\BW_2(t)+\int_{\Rb}g_3(\BY_1(t), \Theta(t), 0,\bphi)\wN_2(dt,d\bphi).
\end{cases}
\end{equation}
Assume that $\bnu_1$ and $\bnu_2$ are finite measures.
Then
the system has a unique invariant measure on $\RX\times\Sc_d$ denoted by $\Pi$, due to the non-degeneracy of the diffusion, which follows from the explicit formula for $g_i$ given above, and the boundedness of $\Theta(t)$ and the Assumption \ref{asp2}(iv).

Now, let $U(r)=(-\ln|r|)\vee 0$.
Then we can apply Theorem \ref{thm-main} to show that if
$$
\begin{aligned}
\int_{\RX\times\Sc_d}\Big(2\theta^\top B_2(\by_1)\theta&+\sum_{l=1}^d\big(|\Sigma_{2l}(\by_1)\theta|^2-2|\theta^\top\Sigma_{2l}(\by_1)\theta|^2\big)\\
&+\ln|\theta+\Gamma_2(\by_1,\bphi)\theta|^2-|\theta+\Gamma_2(\by_1,\bphi)\theta|^2+1\Big) \Pi(d\by_1,d\theta)<0,
\end{aligned}
$$
then $R(t)$ converges to 0 in probability.
Note that the component $\Theta$ lives in a compact manifold $\Sc_d$ but Theorem \ref{thm-main} can be applied here
because the coupling method in Lemma \ref{lm3} and Theorem \ref{thm-main} can be done through the $(d-1)$-dimensional Euclidean coordinates of $\Sc_d$ and
the right inverse of the diffusion coefficient for $(\BY_1(t),\Theta(t))$ is bounded.

%
%
%
%
\subsection{Stability of the Slow Component in a Fast-Slow System}
%

We study the fast-slow coupled jump diffusions as follows
\begin{equation}\label{e1-2.2}
{\small
\begin{cases}
\displaystyle
d\BY_1^\eps(t)=\frac1{\eps} b_1(\BY_1^\eps(t),\BY_2^\eps(t))dt+\frac1{\sqrt{\eps}}\sigma_1(\BY_1^\eps(t),\BY_2^\eps(t))d\BW_1(t)
+\int_\Ra \gamma_1(\BY_1^\eps(t-),\BY_2^\eps(t-), \bphi)\wN_1^{\eps}(dt,d\bphi),
\\
\displaystyle  d\BY_2^\eps(t)=b_2(\BY_1^\eps(t),\BY_2^\eps(t))dt+\sigma_2(\BY_2^\eps(t))d\BW_2(t)+\int_\Rb \gamma_2(\BY_2^\eps(t-), \bphi)\wN_2(dt,d\bphi),\end{cases}}
\end{equation}
where $\wN_1^{\eps}(dt,d\phi)=\bold N_1(dt,d\phi)-\frac 1{\eps}\bnu_1(d\bphi)dt$,
and $\sigma_2$ and $\gamma_2$ are assumed to be functions of $\BY_2^\eps$ only.
We will study the stability of the slow component for small
$\eps$
based
on the stability of the limit system.
This problem is very important in applications. Typically, the limit system is often much easier to analyze and compute.

We assume that Assumption \ref{asp3.1} holds
with $\Sigma_{21},\dots,\Sigma_{2d_2}$ and $\Gamma_2(\bphi)$ are now independent of $\by_1$. Let $g_i,h_i$ be functions defined as in Section \ref{sec:app1}.
Using the change of variable as in Section \ref{sec:app1},
by the generalized It\^o
formula for jump diffusions,
we have the following equations for $\BY_1(t)$, $\Theta^\eps(t)=\frac{\BY_2^\eps(t)}{|\BY_2^\eps(t)|}$, $R^\eps(t)=|\BY_2^\eps(t)|^2$
\begin{equation}\label{e2-2.2}
{\small
\begin{cases}
d\BY_1^\eps(t)=&\!\!\!\displaystyle\frac1{\eps} b_1(\BZ^\eps(t))dt+\frac1{\sqrt{\eps}}\sigma_1(\BZ^\eps(t))d\BW_1(t)+\int_\Ra \gamma_1(\BZ^\eps(t-), \bphi)\wN_1^{\eps}(dt,d\bphi),\\
 d\Theta^\eps(t)=&\!\!\! g_1(\BY^\eps_1(t),\Theta^\eps(t),  R^\eps(t))dt+g_2(\Theta^\eps(t),  R^\eps(t))d\BW_2(t)
 +\displaystyle\int_{\Rb}g_3(\Theta^\eps(t),  R^\eps(t),\bphi)\wN_2(dt,d\bphi),\\
 d R^\eps(t)=&\!\!\! h_1(\BY^\eps_1(t),\Theta^\eps(t),  R^\eps(t))dt+h_2(\Theta^\eps(t),  R^\eps(t))d\BW_2(t)
 +\displaystyle\int_{\Rb}h_3(\Theta^\eps(t),  R^\eps(t),\bphi)\wN_2(dt,d\bphi),\end{cases}}
\end{equation}
where $\BZ^\eps(t)=([\BY_1^\eps(t)]^\top,[\BY_2^\eps(t)]^\top)^\top=([\BY_1^\eps(t)]^\top,\sqrt{R^\eps(t)}[\Theta^\eps(t)]^\top)^\top$.

Let $\Pi^\eps$ be the family of invariant measures of the system
\begin{equation}\label{e3-2.2}
\begin{cases}
\displaystyle d\BY_1^\eps(t)=\frac1{\eps} b_1(\BY_1^\eps,\0)dt+\frac1{\sqrt{\eps}}\sigma_1(\BY_1^\eps(t),\0)d\BW_1(t)+\int_\Ra \gamma_1(\BY_1^\eps(t), \0, \bphi)\wN_1^{\eps}(dt,d\bphi),\\
\displaystyle d\Theta^\eps(t)=g_1(\BY_1^\eps(t),\Theta^\eps(t), 0)dt+g_2(\Theta^\eps(t), 0)d\BW_2(t)+\int_{\Rb}g_3(\Theta^\eps(t),0,\bphi)\wN_2(dt,d\bphi).
\end{cases}
\end{equation}
By a standard averaging principle (see e.g., \cite{XL20}), we can show that
	$\Pi^\eps$ converges weakly to $\Pi_1\times\Pi_2$
where $\Pi_1$ is the invariant measure of the system (due to the fast component of \eqref{e3-2.2} is decoupled from the slow component),
$$
\displaystyle d\BY_1(t)=b_1(\BY_1,\0)dt+\sigma_1(\BY_1,\0)d\BW_1(t)+\int_\Ra \gamma_1(\BY_1(t), \0, \bphi)\wN_1(dt,d\bphi),
$$
and
$\Pi_2$  is the invariant measure of the averaged system
$$
d\Theta(t)=\bar g_1(\Theta(t))dt+ g_2(\Theta(t), 0)dW_2(t)+\int_{\Rb} g_3(\Theta(t),0,\bphi)\wN_2(dt,d\bphi),
$$
with
$$
\begin{aligned}
\bar g_1(\theta)=&\int_{\RX} g_1(\by_1,\theta,0)\Pi_1(d\by_1)\\
=&\Big(-\theta^\top\bar B_2\theta +\frac12\sum_{l=1}^d\big[-|\Sigma_{2l}\theta|^2+3|\theta^\top\Sigma_{2l}\theta|^2\big]\Big)\theta\\
&+\int_{\Rb}\Big(\frac{\theta+\Gamma_2(\bphi)\theta}{|\theta+\Gamma_2(\bphi)\theta|}-\theta-|\theta|^2\Gamma_2(\bphi)+(\theta^\top\Gamma_2(\bphi))\theta\Big)\bnu_2(d\bphi),
\end{aligned}
$$
where
$$\bar B_2=\int_{\RX} B_2(\by_1)\Pi_1(d\by_1).$$
In view of Section \ref{sec:app1}, the condition for the stability of $\BY_2^\eps$ is 
$$
\begin{aligned}
\lambda_\eps:=\int_{\RX\times\Sc_d}\Big(2\theta^\top B_2(\by_1)\theta&+\sum_{l=1}^d\big(|\Sigma_{2l}\theta|^2-2|
\theta^\top\Sigma_{2l}\theta|^2\big)\\
&+\ln|\theta+\Gamma_2(\bphi)\theta|^2-|\theta+\Gamma_2
(\bphi)\theta|^2+1\Big) \Pi^\eps(d\by_1,d\theta)<0.
\end{aligned}
$$
Since $\Pi^\eps(d\by_1,d\theta)$ converges weakly to $\Pi_1\times\Pi_2$ as $\eps\to\infty$, we have
$$
\begin{aligned}
\lim_{\eps\to0}\lambda_\eps=\lambda_*:=&\int_{\RX\times\Sc_d}\Big(2\theta^\top B_2(\by_1)\theta+\sum_{l=1}^d\big(|\Sigma_{2l}\theta|^2-2|\theta^\top\Sigma_{2l}\theta|^2\big)\\
&\qquad+\ln|\theta+\Gamma_2(\bphi)\theta|^2-|\theta
+\Gamma_2(\bphi)\theta|^2+1\Big) (\Pi_1\times\Pi_2)(d\by_1,d\theta).
\end{aligned}
$$
In view of \cite{LDS02}, it is easy to check that $\lambda_*<0$ is the   necessary and sufficient condition for
 the following linear system to be exponentially stable
\begin{equation}\label{eq-3-lin}
d\bar\BY_2(t)=\bar B_2\bar\BY_2(t)dt+\sum_{l=1}^d\Sigma_{2d}\bar\BY_2(t)d\BW_{2l}(t)+\int_\Ra \Gamma_2(\bphi)\bar\BY_2(t)\wN_2(dt,d\bphi).
\end{equation}
In fact,
$$\lim_{t\to\infty}\frac{\ln |\bar \BY_2(t)|}t=\lambda_*<0.$$
Thus, if $\lambda_*<0$ then
$$\lim_{\by_2\to0}\PP_{\by_1,\by_2}\left\{\lim_{t\to\infty}\frac{\ln |\BY_2^\eps(t)|}t<\frac{\lambda_*}2<0\right\}=1,$$
as $\eps$ is sufficiently small.
So, we can obtain the condition for the stability of fast-slow coupled jump-diffusion systems.

\begin{rem}
Treating two time-scale systems, one often  uses the so-called freezing component argument; see \cite{Kh68} and \cite[pp.88-90]{Kushner90}.
Here we use a somewhat different argument. Using polar decomposition \eqref{e3-2.2}, whether $R^\eps$ converges to 0 depends on the invariant probability measure $\Pi^\eps$ of $(Y_1^\eps, \Theta^\eps)$ for each $\eps$. 
Then the Lyapunov exponent that determines stability and that is given by $\lambda^\eps$, is computed based on $\Pi^\eps$. Finally, we show $\lambda^\eps$ converges to $\lambda^*$, which is obtained based on the limit system.
\end{rem}

\section{Stabilization and Consensus Problems}\label{sec:stabl}
\subsection{Stabilization}
In this section, we consider the controlled jump-diffusion system given by following equations,
\begin{equation}\label{e1-2.3}
{\small
\begin{cases}
 d\BX_1(t)=&\disp b_1(\BX_1(t),\BX_2(t))dt+\sigma_1(\BX_1(t),\BX_2(t))d\BW_1(t)+\int_\Ra \gamma_1(\BX_1(t-),\BX_2(t-), \bphi)\wN_1(dt,d\bphi), \\
&+u(t)dt\\
 d\BX_2(t)=&\disp b_2(\BX_1(t),\BX_2(t))dt+\sigma_2(\BX_1(t),\BX_2(t))d\BW_2(t)+\int_\Rb \gamma_2(\BX_1(t-),\BX_2(t-), \bphi)\wN_2(dt,d\bphi),\\
\BX_1(0)=&\disp \bx_1,\quad \BX_2(0)=\bx_2,
\end{cases}}
\end{equation}
where $u(t)$ is a control.
We want to construct a control so as to stabilize the
process $\BX_2$. However, we
cannot
act directly to $\BX_2$
but only the interacting process $\BX_1$ can be controlled.
We will apply our result
to show that under certain conditions, we can control the interacting process $\BX_1$ to have the stability of the
process $\BX_2$.
A system may or may not have an invariant probability measure. The weak stabilization essentially means that we construct a control so that the resulting system is weakly stable. That is, the resulting system has an invariance measure. The term weak stability was originated from the work of Wonham \cite{Wonham}.

\begin{asp}{\rm
	There exists a function $U:\RY\mapsto[0,\infty)$ such that
$$\lim_{\bx_2\to\0} U(\bx_2)=\infty,\quad  \sup_{|\bx_2|\geq\deltaxe}U(\bx_2)<\infty, \quad U(\bx_2) -U(\bx_1)\leq c_0\ln\frac{|\bx_1|}{|\bx_2|};$$
and there are a constant $\Delta_0$ and functions $f_1,f_2:\RX\to\R$ so that
$$[\op U](\bz)\leq f_1(\bx_1),\quad \left(U_{\bx_2}\sigma_2(\bz)\right)^2\leq f_2(\bx_1),\quad\forall  |\bx_2|\leq \Delta_0.$$
Moreover, we suppose that $f_1$ and $f_2$ are bounded above by $K(1+|\bx_1|^2)$
and
$f_1(\0)<0$ and $\lim_{\bx_1\to\infty}f_1(\bx_1)>0$.
Finally, suppose that there is a matrix $Q$ such that
$$
b_1^\top(\bz)Q+\trace{\sigma_1^\top(\bz)Q\sigma_1(\bz)}+
\int_\Ra \gamma_1^\top(\bz, \bphi)Q\gamma_1(\bz, \bphi)\bnu_1(d\bphi)\leq c_1+c_2|\bx_1|, \text{ if } |\bx_2|\leq \Delta_0.
$$}
\end{asp}

From the assumption on $f_1$, we can write $f_1(\bx_1)$ as
$f_1(\bx_1)\leq -K_1+K_2|\bx_1|^2$ for some constants $K_1, K_2$.
Consider the  control $u(t)=A\BX_1(t)$,
where $A$ is a matrix satisfying $$-\lambda_A:=\max_{\bx_1\in\RX}\frac{\bx_1^\top QA\bx_1}{|\bx_1|^2}<
-\frac{K_2 c_1 + K_1c_2}{K_1}.
$$	
Now, when $\BX_2=\0$ the corresponding system for $\BX_1$ is
$$
d\BX_1(t)=\Big(b_1(\BX_1(t),\0)+u(t)\Big)dt+\sigma_1(\BX_1(t),\0) dW_1(t)+\int_\Ra \gamma_1(\BX_1(t),\0, \bphi)\wN_1(dt,d\bphi).
$$
We have for $V(\bx_1):=\bx_1^\top Q\bx_1$ that
$$
\begin{aligned}
\text{}\op V(\bx_1)=& b_1^\top(\bx,\0)Q+\trace{\sigma_1^\top(\bx_1,\0)Q\sigma_1(\bx_1,\0)}+
\int_\Ra \gamma_1^\top(\bx_1,\0, \bphi)Q\gamma_1(\bx_1,\0, \bphi)\bnu_1(d\bphi)+\bx_1^\top QA\bx_1\\
\leq & c_1-(\lambda_A-c_2)|\bx_1|^2.
\end{aligned}
$$
As a result, when $\bx_2=\0$, there exists a unique invariant measure $\Pi_1$ for $\BX_1(t)$ and
$$
c_1-(\lambda_A-c_2)\int_{\RX}|\bx_1|^2\Pi_1(d\bx_1)\geq 0.
$$
That yields
$$
\int_{\RX}f_1(\bx_1)\Pi_1(d\bx_1)\leq -K_1+K_2\frac{c_1}{\lambda_A-c_2}<0,
$$
which implies the stability of the controlled system by an application of our main result (Theorem \ref{thm-main}).

\subsection{
Leader-Following Consensus Problems}
In this section, we apply our results to the leader-following consensus problems.
We consider a network with a leader and $N$ identical followers.
The dynamics of the leader is described by
\begin{equation}\label{eq-x0}
d\bx_0(t)=f(\bx_0(t))dt+d\BW(t),
\end{equation}
and the
dynamics of the $i$-th follower is described by
\def\bu{\bold u}
\begin{equation}\label{eq-y}
d\bx_i(t)=f(\bx_i(t))dt+B\bu_i(t)dt+d\BW(t), \quad i=1,\dots,N,
\end{equation}
where $\bx_i(t)\in\R^n$, $i=0,\dots,N$,
$f:\R^n\to\R^n$, 
$\BW(t)$ is an $n$-dimensional Brownian motion,
$\bu=[\bu_1^\top,\dots,\bu_N^\top]^\top$ ($\bu_i\in\R^n,i=1,\dots,N$) is the control to be designed,
$B\in\R^{n\times n}$.

Now, we model the information flow structure among different agents as follows. Let $\mathcal {G}=\{\mathcal V,\mathcal {E}, \A\}$ be a connected directed graph, where
$\mathcal V=\{0,\dots,N\}$ denotes the set of nodes with $k$ representing the $k^{th}$ agent,
$\mathcal{E}$ is the set of directed edges,
$\A=[a_{kl}]\in\R^{N\times N}$ is the adjacency matrix of $\mathcal{G}$ with
$a_{kl}=1$ or $0$ indicating whether or not there is a directed information flow between agents $l$ and
$k$.
Also, denoted by $\mathcal N_k$ the set of the node $k$'s neighbors, i.e., $\mathcal N_k:=\{l\in \{1,\dots,N\}: a_{kl}=1\}$,
and
deg$_k:=\sum_{l=1}^N a_{kl}$ the degree of $k$-th agent.
The Laplacian matrix of $\mathcal{G}$ is defined as $\h=\D-\A$, where $\D=\diag (\text{deg}_1,\dots,\text{deg}_N).$

Suppose that the leader may send information to the followers, but does not receive information from any one of the
followers,
and that $G$
contains a spanning tree rooted at the leader;
and that
the communication between followers are undirected.
Then we obtain the Laplacian matrix $\h$ of $G$ as follows
$$
\h=\left[
\begin{array}{c c}
0& \bold 0_N^\top\\
\bold a_0& \wdt\h
\end{array}
\right],
$$
where $\bold a_0=(a_{10},\dots,a_{N0})^\top$,
$\bold 0_N:=(0,\dots,0)^\top\in\R^N$ and
$$
\wdt \h=\left[
\begin{array}{c c c c}
\displaystyle\sum_{j=0,j\neq 1}^Na_{1j}& -a_{12}&$\dots$ &-a_{1N}\\
-a_{21}&\displaystyle\sum_{j=0,j\neq 2}^N a_{2j}&$\dots$&-a_{2N}\\
$\vdots$&$\vdots$&$\vdots$&$\vdots$\\
-a_{N1}&-a_{N2}&\cdots&\displaystyle\sum_{j=0,j\neq N}^N a_{Nj}
\end{array}
\right].
$$
Since we assume that the communication between the followers is undirected
in this paper (for simplicity of notation),
$\wdt\h$ is symmetric
and the eigenvalues of $\wdt\h$ are real numbers.

For the $i$-th follower, we consider the following leader-following
consensus protocol
\begin{equation}\label{eq-u}
\bu_i(t)=K\sum_{j=0,j\in\mathcal N_i}^N \bz_{ji}(t), i=1,\dots,N,
\end{equation}
where the symmetric matrix $K\in \R^{n\times n}$ is the control gain to be designed, and
$$\bz_{ji}(t)=\bx_j(t)-\bx_i(t)+(\bx_i(t)-\bx_j(t))\xi_{ji}(t),$$
 is the measurement of the agent $j$ from its neighbor agent $i$,
and $\xi_{ji}$'s are some random noises.
Denoted by
$$\mathcal{U}=\{\bu(t)=([\bu_1(t)]^\top,\dots,[\bu_N(t)]^\top)^\top| \bu_i(t) \text{ is given by \eqref{eq-u}}, t\geq 0,\text{ and }i=1,\dots,N\},$$
the collection of all admissible distributed protocols.
We refer the reader to \cite{NWX13,YHY19} and references therein for 
motivation of the above system.

\begin{asp}\label{asp-3.3} {\rm
\begin{itemize} We assume the following:
\item[{\rm(i)}] $f(y)$ satisfies $|f(y)|\leq -c|y|$, for all $y\in\R^n$ for some constant $c>0$.
\item[{\rm(ii)}] The noise $\xi_{ji}(t)$ satisfies that
$$
\int_0^t (\bx_j(s)-\bx_i(s))\xi_{ji}(s)ds=\int_0^t (\bx_j(s)-\bx_i(s))\left(\sigma_{ji}dw_{ij}(s)+\int_\Ra \gamma_{ji}(\bphi)\wN_{ji}(ds,d\bphi)\right),
$$
where
$w_{ji}(s)$ are independent standard Brownian motions,
$\wN_{ji}(s,\phi)$ are jump processes.
\end{itemize}
}\end{asp}

\begin{deff}
{\rm
System \eqref{eq-x0} and \eqref{eq-y} is said to be exponentially consentable in probability with respect to $\mathcal{U}$ if there exists a protocol $\bu\in\mathcal U$ so that for any $\eps>0$, there exists $\delta>0$ such that
for all $i=1,\dots,N$
$$\PP\left\{|\bx_i(t)-\bx_0(t)|\text{ converges exponentially fast to } 0\right\}\geq 1-\eps,$$
whenever the initial values $(\by_0,\dots,\by_n)\in\R^{nN}$ of \eqref{eq-x0} and \eqref{eq-y} satisfying that
$$
\sum_{i=1}^N|\by_i-\by_0|^2<\delta.
$$
}\end{deff}
To proceed,
let $\BX_i(t)=\bx_i(t)-\bx_0(t), i=1,\dots,N$,
$\bx(t):=\left[\bx_1^\top(t),\dots,\bx_N^\top(t)\right]^\top$,
$F_i(\bx_0(t),\bx_i(t)):=f(\bx_i(t))-f(\bx_0(t))$,
and
$F(\bx_0(t),\dots,\bx_N(t)):=\left[F^\top_1(\bx_0(t),\bx_1(t)),\dots,F^\top_N(\bx_0(t),\bx_N(t))\right]^\top$,
$\BX(t):=\left[\BX_1^\top(t),\dots,\BX_N^\top(t)\right]^\top$.
For simplicity of notation, we will write $F(\bx_0(t),\dots,\bx_N(t))$ as $F(\bx_0(t),\BX(t))$ by the identity
$F(\bx_0(t),\BX(t))=F(\bx_0(t),\bx_0(t)+\BX_1(t),\dots,\bx_0(t)+\BX_N(t))$.
Then, one can obtain
\begin{equation}\label{eq-2.4-delta}
\begin{cases}
d\bx_0(t)=f(\bx_0(t))dt+d\BW(t),\\
d\BX(t)=\left(F(\bx_0(t),\BX(t))-\wdt \h \otimes BK\delta(t)\right)dt+dM_1(t)+dM_0(t)+dM^N_0(t)+dM^N_1(t),
\end{cases}
\end{equation}
where $\mathbf A\otimes \mathbf B$ denotes the Kronecker product  of $\mathbf A$ and $\mathbf B$, and
$$
\begin{aligned}
&M_1(t)=\sum_{i,j=1}^N\int_0^t \sigma_{ji}\left[S_{ij}\otimes BK\right]\BX(s)dw_{ji}(s),\\
&M_0(t)=-\sum_{i=1}^N\int_0^t\sigma_{0i}\left[\bar S_i\otimes BK\right]\BX(s)dw_{0i}(s),\\
&M^N_1(t)=\sum_{i,j=1}^N\int_0^t \int_\Ra \left[S_{ij}\otimes BK\right]\BX(s)\gamma_{ji}(\bphi)\wN_{ji}(ds,d\bphi),\\
&M^N_0(t)=-\sum_{i=1}^N\int_0^t\int_\Ra\left[\bar S_i\otimes BK\right]\BX(s)\gamma_{ji}(\bphi)\wN_{ji}(ds,d\bphi),
\end{aligned}
$$
where
$S_{ij}=\left[s_{kl}\right]_{N\times N}$ is an $N\times N$ matrix with $s_{ii}=-a_{ij}$ and $s_{ij}=a_{ij}$ and all other elements being $0$, for $i,j=1,\dots,N$,
and
$\bar S_i=\left[\bar s_{kl}\right]_{N\times N}$ is an $N\times N$ matrix with $\bar s_{ii}=a_{i0}$ and all other entries being $0$.
It is easily seen that the consensus problem of \eqref{eq-x0} and \eqref{eq-y} is equivalent to the stability of \eqref{eq-2.4-delta}.

\begin{asp}\label{asp-leader}{\rm
There exists a matrix $K\in\R^{n\times n}$ such that there exists a function $U:\R^{nN}\mapsto[0,\infty)$ satisfying the following conditions,
$$\lim_{\BX\to\0} U(\BX)=\infty,\quad U(\BX) -U(\BX')\leq c_0\ln\frac{|\BX'|}{|\BX|},$$
and there is $\Delta_0$ such that
$$[\op U](\BX)\leq c_1(\bx_0),|\BX|\leq \Delta_0,$$
$$ \sum_{i,j=1}^N\left(U_{\BX}^\top(\sigma_{ji}\left[S_{ij}\otimes BK\right]\BX)\right)^2+\sum_{i=1}^N\left(U_\BX^\top( \sigma_{0i}\left[\bar S_i\otimes BK\right]\BX)\right)^2\leq c_2(\bx_0), |\BX|\leq \Delta_0,$$
$$\int_\Rb\Big[\exp\Big\{-\alpha_0\big(U(\BX_2+\gamma_2(\BX,\bphi))-U(\BX)\big)_+\Big\}\Big]\bnu_2(\bphi)\leq c_3(\bx_0), |\BX|\leq \Delta_0,
$$
and
$$
\int \left(c_1(\bx_0)+c_2(\bx_0)+c_3(\bx_0)\right)\mu^*(dx_0)>0.
$$
In above, $\mu^*(\cdot)$ is the invariant measure of \eqref{eq-x0}. Such a $\mu^*(\cdot)$  always exists because of Assumption \ref{asp-3.3}.}
\end{asp}

\begin{thm}
Under Assumptions {\rm \ref{asp-3.3}} and {\rm\ref{asp-leader}}, system \eqref{eq-2.4-delta} is exponentially stable in probability. As a consequence, the leader-following system \eqref{eq-x0} and \eqref{eq-y} is
exponentially consentable in probability.
\end{thm}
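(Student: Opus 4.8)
The plan is to view the error system \eqref{eq-2.4-delta} as a particular instance of the coupled jump diffusion \eqref{e2.3} and then apply Theorem \ref{thm-main}. We take the interacting component to be the leader, $\BX_1:=\bx_0$, and the main component to be the stacked consensus error $\BX_2:=\BX=(\BX_1^\top,\dots,\BX_N^\top)^\top$. With this dictionary $b_1(\bx_0,\BX)=f(\bx_0)$, $\sigma_1\equiv I_n$, $\gamma_1\equiv\0$ (the leader equation \eqref{eq-x0} is autonomous), while $b_2(\bx_0,\BX)=F(\bx_0,\BX)-(\wdt\h\otimes BK)\BX$ (reading the term $\delta(t)$ in \eqref{eq-2.4-delta} as the stacked error $\BX(t)$) and the diffusion and jump coefficients of $\BX$ are the maps, linear in $\BX$, read off from $M_1,M_0,M_1^N,M_0^N$. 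Two structural facts make the framework applicable. First, $F_i(\bx_0,\bx_0)=f(\bx_0)-f(\bx_0)=\0$ gives $F(\bx_0,\0)=\0$, and since every martingale coefficient above is linear in $\BX$, all of $b_2,\sigma_2,\gamma_2$ vanish at $\BX=\0$; hence $\0$ is an equilibrium of the $\BX$-dynamics, as the setup of \eqref{e2.3} requires. Second, the leader equation does not depend on $\BX$, so the boundary system of Assumption \ref{asp2}(iii) is exactly \eqref{eq-x0}, whose unique invariant measure $\mu^*$ is guaranteed by Assumption \ref{asp-3.3} and posited in Assumption \ref{asp-leader}. One bookkeeping step is needed first: the $\BX$-equation is driven by the families $\{w_{ji},w_{0i}\}$ of independent scalar Brownian motions and $\{\wN_{ji}\}$ of independent compensated Poisson measures, which are assembled into single processes $\BW_2$ and $\wN_2$ in the usual way; after this, \eqref{eq-2.4-delta} has exactly the form \eqref{e2.3}.

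Next I would verify Assumptions \ref{asp1}, \ref{asp2}, and \ref{asp4}. Assumption \ref{asp1} (or the local Lipschitz substitute noted in the Remark after it) follows from the smoothness and growth of $f$ in Assumption \ref{asp-3.3}(i), the linearity in $\BX$ of the martingale coefficients, and the standing square-integrability of the $\sigma_{ji},\gamma_{ji}$ against the $\bnu_{ji}$. For Assumption \ref{asp4} the non-degeneracy is immediate since $\sigma_1\equiv I_n$ has bounded right inverse with $c_\sigma=1$; the one-sided Lipschitz bound for $b_1$ reduces to a dissipativity estimate for $f$, and $\gamma_1\equiv\0$ together with the linear form of $\gamma_2$ gives the Lipschitz-in-$\bz$ requirement. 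For Assumption \ref{asp2}: part (i) holds with $V_0(\bx_0)=V_1(\bx_0)=|\bx_0|^2$, using $\op V_0(\bx_0,\0)\le n-2c|\bx_0|^2$ from the dissipativity of $f$; part (ii) is precisely what Assumption \ref{asp-leader} supplies, the logarithmic comparison there being \eqref{a.eq5} with $m_0=c_0$, the bound on $[\op U](\BX)$ giving \eqref{a.eq6}, the two quadratic bounds together with the exponential-jump bound giving \eqref{a.eq8}, and the growth of $c_1,c_2,c_3$ (they arise from the It\^o calculus on the linear-in-$\BX$ dynamics and are dominated by $1+|\bx_0|^2$) giving \eqref{a.eq9}, with $f_1,f_2$ the appropriate combinations of $c_1,c_2,c_3$; and part (iii) is the strict positivity of $\int(c_1(\bx_0)+c_2(\bx_0)+c_3(\bx_0))\,\mu^*(d\bx_0)$ assumed in Assumption \ref{asp-leader}, which furnishes a positive lower bound $\Lambda_1$ for the relevant Lyapunov exponent.

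With these verifications, Theorem \ref{thm-main} applies to the pair $(\bx_0,\BX)$: for every $\eps>0$ and every fixed leader initial value $\by_0$ there is $\delta=\delta(\by_0,\eps)>0$ such that, whenever the initial error satisfies $|\BX(0)|=\big(\sum_{i=1}^N|\by_i-\by_0|^2\big)^{1/2}\le\delta$, one has $\PP\big\{\limsup_{t\to\infty}t^{-1}\ln|\BX(t)|\le-\gamma_0\big\}\ge 1-\eps$ for some $\gamma_0>0$ independent of $\eps$. Since $|\BX_i(t)|=|\bx_i(t)-\bx_0(t)|\le|\BX(t)|$ for every $i$, this says that \eqref{eq-2.4-delta} is exponentially stable in probability; and via the equivalence already noted between the consensus problem and the stability of \eqref{eq-2.4-delta}, it follows that \eqref{eq-x0}--\eqref{eq-y} is exponentially consentable in probability with respect to $\mathcal U$ under the protocol \eqref{eq-u} with the gain $K$ supplied by Assumption \ref{asp-leader}.

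The main obstacle, and the step requiring the most care, is the translation in the second paragraph: showing that the bounds of Assumption \ref{asp-leader} on $[\op U]$ and on the diffusion and jump actions of $U$ genuinely reproduce \eqref{a.eq6}, \eqref{a.eq8}, and \eqref{a.eq9} against a common dominating function $V_1$, which forces one to track the growth in $\bx_0$ of the functions $c_1,c_2,c_3$ produced by the It\^o expansion of $U$ so that $\Lambda_1$ is well defined and strictly positive. A secondary technical point is the reduction of the vector of independent driving noises $\{w_{ji},w_{0i},\wN_{ji}\}$ to a single pair $(\BW_2,\wN_2)$ while preserving both the linear-in-$\BX$ structure and the square-integrability needed for Assumption \ref{asp1}. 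Once these are in place the theorem is essentially a corollary of Theorem \ref{thm-main}, and the ``as a consequence'' clause is immediate from $|\bx_i-\bx_0|\le|\BX|$.
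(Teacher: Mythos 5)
Your proposal is correct and follows exactly the route the paper intends: the theorem is stated as a direct corollary of Theorem \ref{thm-main} under the identification $\BX_1:=\bx_0$, $\BX_2:=\BX$, with Assumption \ref{asp-leader} supplying the Lyapunov function $U$ and the data $f_1,f_2$ of Assumption \ref{asp2} and the autonomy of the leader equation \eqref{eq-x0} furnishing the boundary system and its invariant measure $\mu^*$. Your verification of Assumptions \ref{asp1}--\ref{asp4} (reading the dissipativity in Assumption \ref{asp-3.3}(i) and the inequality directions in Assumption \ref{asp-leader} in the only way consistent with Assumption \ref{asp2}) fills in precisely the steps the paper leaves implicit, so there is nothing substantive to add.
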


In fact, our above results are verifiable. To illustrate that, we 
provide the following explicitly computational example.

\begin{exam}{\rm
In this example, assume that $f(x)=Ax$ where $A\in\R^{n\times n}$.
Assume that $B$ is invertible.
Let $U(\delta)=-\ln|\delta|$, 
by directed calculations, we have
$$
\Lom U(\BX)=-\frac{\BX \left(A\otimes I_N-\wdt \h \otimes BK\right)\delta^\top}{|\BX|^2}.
$$
Then, it is easy to check the remaining conditions.
}\end{exam}

\section{Conclusion}\label{sec:con}
We studied stability and stabilization of a fully coupled system of jump diffusions. Sufficient conditions for stability
are derived. 
We then  investigate the stability of linearizable jump diffusions and fast-slow coupled jump diffusions. Next, we develop strategies for weak stabilization of
a coupled system in which only one component can be controlled. Also considered are
consensus problem of leader-following systems.
The consideration of this paper can be readily extended to systems with many components. There are many interesting important problems remain to be investigated.
Future research could be extended
for regime-switching with state-dependent diffusions or hidden Markov systems. Efforts can also be directed to studying systems with mean-fields interactions. These and other topics deserve to be carefully examined.

\end{document}